\newtheorem{theorem}[subsubsection]{Theorem}
\newtheorem{lemma}[subsubsection]{Lemma}
\newtheorem{proposition}[subsubsection]{Proposition}
\newenvironment{remark}{\medskip \refstepcounter{subsubsection}
\noindent  {\bf Remark \thetheorem}.\rm}{\,}
\newenvironment{definition}{\medskip \refstepcounter{subsubsection}
\noindent  {\bf Definition \thetheorem}.\rm}{\,}
\newtheorem{theointro}{Theorem}
\newtheorem{corintro}[theointro]{Corollary}
\def\mb#1{{\mathbb #1}}
\def\mc#1{{\mathcal #1}}
\def\ff{I\hspace{-0.45mm}I}
\def\over#1{\overline{#1}}
\def\db{\overline{\partial}}
\def\hok{\mbox{}\begin{picture}(10,10)\put(1,0){\line(1,0){7}}
\put(8,0){\line(0,1){7}}\end{picture}\mbox{}}
\def\BOne{{\mathchoice {\rm 1\mskip-4mu l} {\rm 1\mskip-4mu l}
                          {\rm 1\mskip-4.5mu l} {\rm 1\mskip-5mu l}}}
\def\Aut{\mathrm{Aut}}
\def\grad{\mathrm{grad}\,}
\def\PSL{\mathbb{PSL}}
\def\SS{\mathbb{S}}
\def\ZZ{\mathbb{Z}}
\def\SL{\mathbb{SL}}
\def\PP{\mathbb{P}}
\def\vol{\mathrm{vol}}
\def\Isom{\mathrm{Isom}}
\def\cM{\mathcal{M}}
\def\cN{\mathcal{N}}
\def\cH{\mathcal{H}}
\def\cS{\mathcal{S}}
\def\Sym{\mathrm{Sym}}
\def\cL{\mathfrak{L}}
\def\del{\partial}
\def\fh{\mathfrak{h}}
\def\ft{\mathfrak{t}}
\def\ff{\mathfrak{f}}
\def\fg{\mathfrak{g}}
\def\fq{\mathfrak{q}}
\def\fz{\mathfrak{z}}
\def\fp{\mathfrak{p}}
\def\cV{\mathcal{V}}
\def\RR{\mathbb{R}}
\def\LL{\mathbb{L}}
\def\Harm{\mathbf{H}}
\def\CC{\mathbb{C}}
\def\cX{\mathcal{X}}
\def\cU{\mathcal{U}}
\def\fF{\mathfrak{F}}
\def\fFc{\mathfrak{F}^c}
\def\la{\langle}
\def\ra{\rangle}
\def\d{\partial}
\def\n{\nabla}
\def\>{\rangle}
\def\<{\langle}
\def\>{\rangle}
\begin{document}

\title[Deformation of extremal metrics, complex manifolds]
{Deformation of extremal metrics, complex manifolds and the relative Futaki invariant}
\author[Y. Rollin]{Yann Rollin}
\author[S.R. Simanca]{Santiago R. Simanca}
\author[C. Tipler]{Carl Tipler}
\address{D\'epartment de Math\'ematiques, Laboratoire Jean Leray,
2, Rue de la Houssini\`ere - BP 92208, F-44322 Nantes, FRANCE}
\email{yann.rollin@univ-nantes.fr, srsimanca@gmail.com, 
carl.tipler@univ-nantes.fr}

\begin{abstract}
Let $(\mc{X},\Omega)$ be a closed polarized complex manifold, $g$ be 
an extremal metric on $\cX$ that represents the K\"ahler class $\Omega$, and
$G$ be a compact connected subgroup of the isometry group
$\Isom(\cX,g)$. Assume that the Futaki invariant relative to $G$ is 
nondegenerate at $g$. Consider a smooth family $(\mc{M}\to B)$ of polarized 
complex deformations of $(\cX,\Omega)\simeq (\cM_0,\Theta_0)$ provided with 
a holomorphic action of $G$ which is trivial on $B$. Then for every $t\in B$ sufficiently small,
there exists an $h^{1,1}(\cX)$-dimensional family of extremal
K\"ahler metrics on $\cM_t$ whose K\"ahler classes are arbitrarily 
close to $\Theta_t$. We apply this deformation theory to show that certain
complex deformations of the 
Mukai-Umemura $3$-fold admit  K\"ahler-Einstein metrics.
\end{abstract}

\maketitle

\section{ Introduction}
\label{secintro}
Every closed K\"ahler manifold is stable under complex deformations. Indeed, 
the classical result of Kodaira and Spencer \cite{ks} allows us to follow 
differentiably the K\"ahler metric under small perturbations of the complex
structure. Our goal here is the study of the stability of the {\it extremal} 
condition of a K\"ahler metric under complex deformations.

Let us recall  that a cohomology class
$\Omega\in H^2(\mc{X},\mb{R})$ on a complex manifold $(\mc{X},J)$ 
is called a {\it polarization} of $\mc{X}$ if $\Omega$ can be 
be represented by the K\"ahler form $\omega_g$ of a K\"ahler metric $g$
on $\mc{X}$. In this case, the pair $(\mc{X},\Omega)$ is called a polarized
complex  manifold. In what follows, we shall sometimes use the metric $g$ 
and its K\"ahler form $\omega_g$ interchangeably. We shall 
denote by $c_1=c_1(\mc{X},J)$ the first Chern class of $(\mc{X},J)$.

The set of all 
K\"ahler forms on $\cX$ representing a given polarization $\Omega$ is
denoted by ${\mathfrak M}_{\Omega}$. The search for a canonical representative
$g$ of $\Omega$ is done by looking for a critical point of the 
functional 
\begin{equation}\label{func}
\begin{array}{ccc}
\mathfrak{M}_{\Omega} & \rightarrow  & \mb{R} \\
 g  & \mapsto & {\displaystyle \int_\cX s_g^2 d\mu_g} 
\end{array}\, . 
\end{equation}
Here, $s_g$ is the scalar curvature of $g$ and
$d\mu_g$ its volume form. These critical points are the extremal metrics of 
Calabi \cite{c1}. The condition for $g$ to be such can be stated simply
by saying that the gradient of $s_g$ is a real holomorphic vector field, which
in itself shows a subtle interplay between extremal metrics and the 
complex geometry of $\mc{X}$.

We denote by ${\rm Aut}(\mc{X})$ 
the automorphism group of $\cX$. The space $\fh(\cX)$ of holomorphic vector
fields on $\cX$ is its Lie algebra. The space $\fh_0(\mc{X})$ of holomorphic 
vector fields with zeroes is an ideal of $\fh(\cX)$. 
The identity component $G'={\rm Isom}_0(\cX,g)$ of the isometry group of 
the metric metric $g$ is identified with a compact subgroup of 
${\rm Aut}(\mc{X})$, and its Lie algebra
is denoted $\fg'$. If $g$ is extremal, then
$G'$ is a {\it maximal connected compact subgroup} of
${\rm Aut}(\mc{X})$ \cite{c2}.

\subsection{Main result}\label{s11}
Let us consider a polarized complex manifold $(\cX,\Omega)$ and a smooth
family of complex deformations  $\mc{M}\to B$ of 
$\mc{X} \simeq \mc{M}_0$. Here $B$ is some open neighborhood of the origin 
in an Euclidean space $\mb{R}^m$ for some $m\geq 0$ 
(see \S\ref{deformations} for definitions). We let 
$E\to B$ be the vector bundle of second fiber cohomology of the fibration
$\mc{M}\to B$, whose fibers are $E_t=H^2(\mc{M}_t,\mb{R})$.
A smooth section $\Theta$ of $E\to B$ such that $\Theta_t$
admits a K\"ahler representative $\omega_t$ in $\mc{M}_t$ for all $t$ in $B$
is said to be a polarization of the family $\mc{M}\to B$. We obtain
a family $(\mc{M}_t,\Theta_t)$ of polarized manifolds parametrized by
$t\in B$. A polarized family of complex manifolds $(\mc{M}\to B,\Theta)$, and
a polarized complex manifold $(\mc{X},\Omega)$ together with an isomorphism 
$\mc{X} \simeq \mc{M}_0$ that makes $\Omega$ and $\Theta_0$ correspond to
each other, is said to be a polarized deformation of $(\mc{X},\Omega)$.

By shrinking the neighborhood of the origin $B$ if necessary, the 
Kodaira-Spencer theory allows us to choose a smooth $2$-form $\beta$ in
$\mc{M}$ such that $\omega_t=\beta|_{\mc{M}_t}$ is a K\"ahler form in
$\cM_t$ representing $\Theta_t$.
In this expression, $\beta|_{\mc{M}_t}$ 
denotes the pullback of $\beta$ by the canonical inclusion $\mc{M}_t
\hookrightarrow\mc{M}$. Such a $\beta$ is said to 
represent the polarization $\Theta$. If $g$ is a K\"ahler metric on $\mc{X}$ 
whose K\"ahler form $\omega_g$ represents $\Omega$, $\beta$ can be
constructed so that $\omega_0$ and $\omega_g$ agree under the isomorphism 
$\mc{X}\simeq \cM_0$. 
\medskip

Let us now assume that the metric $g$ is extremal. It is then a natural 
to ask if the representative $\beta$ of the polarization $\Theta$ can be chosen
so that the metric $g_t$, of K\"ahler form $\omega_t$ on $\mc{M}_t$, 
is extremal.
\medskip

A positive answer to this general statement is not to be
expected, and as it has been pointed out in \cite{ls2}, 
``the answer is an emphatic {\sl{no}}'' (cf. \cite{bdb} for some 
counter-examples). However, if we assume some symmetries for $\cM$ and the
nondegeneracy of the relative Futaki invariant, the answer is actually 
{\it yes} provided we allow the polarization $\Theta$ to be deformed
also.
\medskip

If the metric $g$ on $\cX$ is extremal, then $G'={\rm Isom}_0(\cX,g)$ is a
maximal connected compact subgroup of ${\rm Aut}(\cX,g)$ that
acts holomorphically on the central fiber $\cM_0$ of $\cM\to
B$. In general though, this action will not extend as a 
{\it holomorphic action} 
of $G'$ (cf. \S\ref{sec:holac} for a precise definition of this terminology) 
to the total space of deformations $\cM\to B$. We shall assume that the 
action of $G'$ extends partially, and we have a connected compact subgroup 
$G$ of $G'$ that acts holomorphically on $\cM\to B$ with trivial
action on $B$.
We denote by $C^\infty_G(\cM)$ the space of $G$-invariant 
smooth functions on $\cM$.

It is then possible to introduce the notion of
reduced scalar curvature $s^G_g$ for any $G$-invariant K\"ahler metric on $\cX$
(cf. \S\ref{sec:reducedscal}), and the Futaki
invariant relative to $G$ (cf. \S\ref{sec:relfut}) \cite{gbook}
$$
\fFc_{G,\Omega}:\fq/\fg\to \RR\, ,
$$
where $\fg$ is the Lie algebra of $G$ and $\fq$ is the normalizer of
$\fg$ in $\fh(\cX)$. The extremal condition of a metric is encoded in the 
equation $s^G_g=0$, and a $G$-invariant extremal metric representing 
$\Omega$ have vanishing reduced scalar curvature if, and only if,
$\fFc_{G,\Omega}$ vanishes identically \cite{fut,c2}. 
This characterization of extremal
metrics reinterprets in this manner a presentation advocated elsewhere
\cite{si0,si3,lusa}.

Let  $\fp$ be the normalizer of $\fg$ in $\fg'$. We set 
$\fg_0=\fg \cap \fh_0(\cX)$ and $\fp_0 =\fp\cap \fh_0(\cX)$, respectively.
The differential of the relative Futaki invariant induces a linear map
$$
\fq/\fg\to (H^{1,1}(\cX)\cap H^2(\cX,\RR))^*\, .
$$
We say that $\fFc_{G,\Omega}$ is nondegenerate if the
restriction of this map to $\fp_0/\fg_0$ is injective. 

Using the holomorphic action of $G$ on $\cM \to B$, we refine the 
construction of the representative $\beta$ of the polarizatin $\Theta$ so 
that the K\"ahler metric $g_t$ it induces on $\cM_t$ is
$G$-invariant (cf. \S\ref{eqsetup}) for all $t\in B$. 
We then use the corresponding K\"ahler form 
$\omega_t$ as the origin of the affine space of 
K\"ahler metrics on $\mc{M}_t$ that represent the polarization $\Theta_t$.
Let $\Harm_t$ be the space of real $g_t$-harmonic $(1,1)$-forms 
on $\cM_t$. By the Kodaira-Spencer theory, and perhaps
shrinking $B$ to a sufficiently small neighborhood of the origin, the
spaces $\Harm_t$ are the fibers of a smooth vector bunde $\Harm_t\to B$.
Given a section $\alpha$ of the bundle
$\Harm\to B$ and a sufficiently small function $\phi\in C^\infty_G(\cM)$,
we consider the family of K\"ahler metrics 
$g_{t,\alpha,\phi}$ on $\cM_t$ defined by the K\"ahler forms 
$$
\omega_{t,\alpha,\phi}= \omega_t + \alpha|_{\cM_t} + dd^c \phi|_{\cM_t}.
$$
Assuming that $g_0=g$ is extremal, we seek solutions 
$g_{t,\alpha, \phi}$ of the equation 
$$
s^G_{g_{t,\alpha,\phi}}=0 
$$
for values of $(t,\alpha,\phi)$ in a sufficiently small neighborhood of 
$(0,0,0)$ where the solution takes on the value $g_{0,0,0}=g$. 
Introducing suitable Banach spaces, the said equation 
defines as Fredholm map that is a submersion at the origin if, and only if,
the relative Futaki invariant is nondegenerate. The 
implicit function theorem then yields our main result:

\begin{theointro}
\label{theo} 
Let $(\mc{M}\to B,\Theta)$ be a polarized family of deformations 
 of a closed polarized complex manifold $(\mc{X},\Omega)$.
Suppose that $g$ is an extremal metric whose K\"ahler form $\omega_g$ 
respresents $\Omega$, and that $G$ is a
compact connected subgroup of $G'={\rm Isom}_0(\cX,g)$ such that 
\begin{itemize}
\item $G$ acts holomorphically on $\cM\to B$ and trivially on $B$,
\item the reduced scalar curvature $s^G_g$ of $g$ is zero,
\item the Futaki invariant relative to $G$ is nondegenerate at $g$.
\end{itemize}
Then, given any $G$-invariant representative $\beta$ of the polarization
$\Theta$ such that induced metric $g_0$ on $\cM_0$ agrees with $g$ via 
the isomorphism $\cM_0\simeq \cX$, and shrinking $B$ to a 
sufficiently small neighborhood of the origin if necessary, 
the space of K\"ahler metrics  on $\cM_t$ with vanishing reduced scalar 
curvature lying sufficiently close to the metric $g_t$ induced by 
$\beta$ is a smooth manifold of dimension  $h^{1,1}(\cX)$.
In particular, there are arbitrarily small perturbations $\Theta'$ of the
polarization $\Theta$ such that $\Theta'_t$ is represented by an
extremal metric. 
\end{theointro}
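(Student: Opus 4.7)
The plan is to rewrite the search for extremal metrics on the deformations $\cM_t$ as the vanishing locus of a smooth nonlinear Fredholm map between Hölder--Banach spaces, and then apply the implicit function theorem; the nondegeneracy hypothesis on $\fFc_{G,\Omega}$ enters precisely as the condition that makes the linearization at $(0,0,0)$ surjective.

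First, fix Hölder data $(k,\gamma)$ and complete the $G$-invariant objects accordingly. Assembling the fiberwise prescription $\omega_{t,\alpha,\phi} = \omega_t + \alpha|_{\cM_t} + dd^c\phi|_{\cM_t}$ and applying the reduced scalar curvature produces a smooth map
$$
F\colon\; B \times \Gamma^{k+2,\gamma}_G(\Harm) \times C^{k+4,\gamma}_G(\cM) \;\longrightarrow\; C^{k,\gamma}_G(\cM),\qquad F(t,\alpha,\phi) = s^G_{g_{t,\alpha,\phi}},
$$
on the open subset where $\omega_{t,\alpha,\phi}$ is positive definite, with $F(0,0,0) = s^G_g = 0$ by hypothesis. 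The $\phi$-derivative at the origin is, up to a nonzero universal constant, the restriction to $G$-invariant functions of the Lichnerowicz operator $\mathcal{L}_g := (\delb\,\grad)^{\ast}(\delb\,\grad)$, an elliptic self-adjoint fourth-order operator. Since $g$ is extremal with $s^G_g=0$, its $G$-invariant kernel consists of $G$-invariant real holomorphy potentials and decomposes canonically as $\RR \oplus \fp_0/\fg_0$ (constants together with potentials of vector fields in $\fp_0$, taken modulo $\fg_0$). Self-adjointness yields the same identification for the $L^2$-cokernel.

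The core step is to pair the full linearization with the cokernel. A variational computation, combining the linearized scalar curvature at a varying complex structure with the defining formula for $\fFc$, should yield: for every $\xi \in \fp_0/\fg_0$ with normalized holomorphy potential $u_\xi$ and every variation $(\dot t,\dot\alpha, 0)$,
$$
\int_\cX DF_{(0,0,0)}(\dot t,\dot\alpha,0)\,u_\xi\,d\mu_g \;=\; -\,c\cdot d\fFc_{G,\Omega}(\xi)\bigl([\dot\alpha] + \dot\Theta(\dot t)\bigr),
$$
for a nonzero constant $c$, where $[\dot\alpha]\in H^{1,1}(\cX,\RR)$ and $\dot\Theta(\dot t)$ is the class variation induced by the infinitesimal deformation $\dot t$. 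By the nondegeneracy hypothesis, $\xi \mapsto d\fFc_{G,\Omega}(\xi)$ is injective from $\fp_0/\fg_0$ into $(H^{1,1}(\cX,\RR))^{\ast}$; dually, the map $H^{1,1}(\cX,\RR) \to (\fp_0/\fg_0)^{\ast}$ is surjective. Since every real $(1,1)$-class admits a $G$-invariant harmonic representative (obtained by averaging), the variation $\dot\alpha$ alone already surjects onto the cokernel, so $DF_{(0,0,0)}$ is surjective, and the surjectivity is already achieved at $\dot t = 0$.

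Finally, the implicit function theorem yields that $F^{-1}(0)$ is a smooth Banach submanifold near $(0,0,0)$ of dimension $\dim B + h^{1,1}(\cX)$, and elliptic regularity promotes its points to smooth solutions. Because the linearization surjects at fixed $t=0$, the projection to $B$ is a submersion, and so for each small $t \in B$ the fiber is a smooth $h^{1,1}(\cX)$-dimensional manifold of $G$-invariant Kähler metrics on $\cM_t$ with $s^G_{g_t}=0$; these are extremal by the characterization recalled in the introduction. The main technical obstacle is establishing the Futaki-pairing identity in its equivariant form: one must combine the change of Kähler class coming from the harmonic variation $\dot\alpha$ with the extrinsic change coming from the deformation $\dot t$ into a single variational formula for $\fFc$, and verify that the correct normalization of holomorphy potentials makes the identification of the cokernel with $\fp_0/\fg_0$ exact.
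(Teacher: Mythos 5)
Your overall strategy coincides with the paper's: realize the vanishing of the reduced scalar curvature as the zero locus of a nonlinear Fredholm map in $(t,\alpha,\phi)$, identify the cokernel of its linearization with a space of relative holomorphy potentials, pair that cokernel against the derivative of $\fFc_{G,\Omega}$ in harmonic directions, and conclude by the implicit function theorem with the fiberwise dimension count $h^{1,1}(\cX)$. There are, however, two concrete gaps. The first is in the functional-analytic setup: by definition $s^G_{g_{t,\alpha,\phi}}$ is the projection of the scalar curvature onto the orthogonal complement of $i\cH^{\fz_0}_{t,\alpha,\phi}$, so the image of your map $F$ lies in a closed subspace of $C^{k,\gamma}_G(\cM)$ of codimension $1+\dim \fg_0^G$, and $F$ can therefore never be a submersion onto $C^{k,\gamma}_G(\cM)$ unless that codimension is $1$ and even then the constants obstruct. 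Indeed the constant function and the Killing potentials of $\fz_0\subset\fg$ always annihilate $DF$ (the constants because $\int_\cX s^G\,d\mu=0$ identically, the $\fz_0$-potentials because $\fFc_{G,\Omega}$ vanishes identically on $\fg$), and the nondegeneracy hypothesis says nothing about these directions. The paper repairs this by composing with the fixed projection $P=\pi^W_0$ onto $W_{k,0}$, the $L^2$-complement of $i\cH^{\fz_0}_g$, after which the cokernel is exactly $\fz'_0/\fz_0\simeq\fp_0/\fg_0$; your claimed cokernel $\RR\oplus\fp_0/\fg_0$ overcounts by the constants, which already sit inside $i\cH^{\fz_0}_g$. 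Relatedly, to have a fixed Banach space of $G$-invariant functions at all, the $G$-action must be made $t$-independent in the trivialization; the paper does this via the Palais--Stewart rigidity of compact group actions (the ``adapted trivialization''), a step your argument presupposes silently.

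The second gap is the one you flag yourself: the identity equating $\int_\cX DF_{(0,0,0)}(0,\dot\alpha,0)\,v_\Xi\,d\mu_g$ with the derivative of the relative Futaki invariant in the direction $[\dot\alpha]$ is not an incidental normalization issue but the entire content of the surjectivity step. It requires the explicit variational formulas for $s^G_g$ under changes of $\phi$ and of the harmonic part $\alpha$ (the paper's Lemma on the variation of the reduced scalar curvature) and the computation of $\frac{d}{dt}\fFc_{G,\Omega+t\alpha}|_{t=0}$, together with the fact that $\rho_g=\rho_g^G$ when $s^G_g=0$. Deferring it leaves the proof incomplete. Once it is supplied, your argument closes as the paper's does; note in particular that, exactly as you observe, no $\dot\Theta(\dot t)$ contribution is needed, since the paper's map carries $t$ through to the target and surjectivity is achieved already at $\dot t=0$.
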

\begin{remark}
Theorem~\ref{theotech}  is a more precise and technical version of
Theorem~\ref{theo}. In particular, it contains more informations about
the kind of deformations $\Theta'$ of the polarization $\Theta$. This
refined version of the above theorem will lead to the applications below.
\end{remark}
\subsection {Applications}
Theorem \ref{theo} generalizes the results of \cite{ls,ls2}. The
case $G=\{1\}$ corresponds to the deformation theory of constant
scalar curvature K\"ahler metrics whereas the case $G=G'={\rm Isom}_0(\cX,g)$
corresponds to the deformation theory of extremal metrics
(cf. \S\ref{sec:appli} for details). The cases corresponding to 
intermediate choices for the group $G$ is new.
We illustrate the power of Theorem \ref{theo} with the analysis of 
some new examples of extremal metrics, and some cases of interest. However, we
observe that the applicability of our theorem to these
follows by elementary reasons, and the nondegeneracy of the relative Futaki
invariant is easy to see. It would be of interest to find applications
of our result in more demanding situations, which could 
 illustrate the problem at hand in further detail. Some highly nontrivial 
applications were already given in \cite{ls2}.

\subsubsection{Maximal torus symmetry}
Extremal K\"ahler metrics  are automatically stable
under complex  deformations with maximal torus symmetry:

\begin{corintro}\label{co2}
Let $(\mc{M} \to B,\Theta)$ be a polarized family of  deformations of 
a closed polarized complex manifold $(\cX,\Omega)$. Assume that
$\Omega$ admits an extremal representative and that $\cM\to B$ is
endowed with a holomorphic action of a maximal compact torus
$G=T\subset \Aut(\cX)$ acting trivially on $B$. 
 Then for $t\in B$ sufficiently small, $\Theta_t$ is represented
 by an extremal K\"ahler metrics on $\cM_t$.
\end{corintro}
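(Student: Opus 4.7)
The plan is to apply Theorem~\ref{theo} with $G = T$. First I would arrange the extremal representative $g$ to be $T$-invariant. Starting from any extremal representative $g_0$ of $\Omega$, its connected isometry group $G' := \Isom_0(\cX,g_0)$ is a maximal connected compact subgroup of $\Aut(\cX)$; the connected torus $T$ lies in $\Aut_0(\cX)$, and conjugacy of maximal compact subgroups in $\Aut_0(\cX)$ produces $\phi \in \Aut_0(\cX)$ with $\phi^{-1}T\phi \subset G'$. Then $g := \phi^* g_0$ is $T$-invariant, extremal, and still represents $\Omega$, since $\phi$ acts trivially on $H^2(\cX,\RR)$. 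Since $T$ is maximal as a compact torus of $\Aut(\cX)$, it is a fortiori a maximal torus of the compact connected group $G' = \Isom_0(\cX, g)$.

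Both remaining hypotheses of Theorem~\ref{theo} reduce to standard facts about maximal tori in compact groups. The extremal vector field $J\grad s_g$ is a Killing field; it is $T$-invariant because $g$ and $s_g$ are, so it lies in the centralizer of $\ft$ in the compact Lie algebra $\fg'$. That centralizer is $\ft$, hence $s_g$ is a Killing potential for an element of $\ft$, i.e.\ $s_g^T = 0$. Similarly, the normalizer of the maximal torus $\ft$ in $\fg'$ is $\ft$ itself (the Weyl group being discrete), so $\fp = \ft = \fg$ and the space $\fp_0/\fg_0$ on which the nondegeneracy of $\fFc_{T,\Omega}$ is tested reduces to zero; the condition is therefore vacuous.

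Theorem~\ref{theo} then produces an $h^{1,1}(\cX)$-dimensional family of extremal metrics on $\cM_t$ for $t$ small. To upgrade the conclusion from some nearby polarization $\Theta'_t$ to $\Theta_t$ itself, I would apply the implicit function theorem to the map $(t, g') \mapsto [g'] - \Theta_t$, defined on the total moduli of extremal solutions over $B$ and taking values in the vector bundle $E \to B$. Its differential in the $g'$-direction at $(0, g)$ is the canonical isomorphism from $\Harm_0$ onto $H^{1,1}(\cX) \cap H^2(\cX,\RR)$, so for $t$ small one finds an extremal metric on $\cM_t$ whose Kähler class is exactly $\Theta_t$; this is the refinement encoded in Theorem~\ref{theotech}. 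The only step of genuine substance is this last one, and it is already handled by the technical version of Theorem~\ref{theo}; every other check reduces to an elementary dimension count in compact Lie theory.
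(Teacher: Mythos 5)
Your proposal is correct and follows essentially the same route as the paper: conjugate $T$ into $\Isom_0(\cX,g)$, use the maximality of the torus to place the extremal field $J\grad s_g$ inside $\ft$ (so $s^T_g=0$) and to make $\fp_0/\fg_0=0$ (so nondegeneracy is vacuous), then invoke the main theorem. The only cosmetic difference is that the paper computes the centralizer of $\ft$ inside all of $\fh(\cX)$ via Lemma~\ref{torus} and obtains the class $\Theta_t$ exactly by taking $V=0$ in Theorem~\ref{theotech}, whereas you work inside $\fg'$ and phrase the last step as an implicit-function argument on Kähler classes; both yield the same conclusion.
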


Corollary \ref{co2} may look reminiscent of \cite[Lemma 4]{acgt}, where 
a stability result of the extremal condition under complex deformations 
with symmetries is obtained as an extension of the theory of \cite{ls}.
This result is carried out under such restrictive assumptions on 
the deformation of the complex structure and the K\"ahler class that 
the scope of its applicability is rather limited.

\subsubsection{The Mukai-Umemura $3$-fold}
We may apply Theorem \ref{theo} to the study of 
the Mukai-Umemura Fano $3$-fold $\cX$, with automorphism group
${\rm Aut}(\cX)=\PSL(2,\CC)$. Donaldson has proven that this variety 
admits a K\"ahler-Einstein metric \cite{Don08}. Our deformation 
theorem applies. We obtain the following result:

\begin{corintro}
\label{cor:mu1}
Let  $(\cM\to B,\Theta)$ be a polarized deformation of the
Mukai-Umemura $3$-fold with polarization 
$(\cX,c_1(\cX))$ and $\Theta_t=c_1(\cM_t)$. Assume $\cM\to B$ is one of the deformations
described at \S\ref{mudefo}, endowed with  a holomorphic
action of a group $G\subset 
\PSL(2,\CC)$
isomorphic to a dihedral group of order $8$ or a semidirect
product  $\SS^1\rtimes \ZZ/2$. 
 Then for every $t\in B$ 
sufficiently
small, $\Theta_t$ is represented by a K\"ahler-Einstein
metric on $\cM_t$.
\end{corintro}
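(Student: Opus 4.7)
The plan is to deduce Corollary~\ref{cor:mu1} from Theorem~\ref{theo} (more precisely, its refined version Theorem~\ref{theotech}), taking for initial data Donaldson's K\"ahler-Einstein metric $g$ on the Mukai--Umemura $3$-fold $\cX$. Here $\Aut(\cX)=\PSL(2,\CC)$ and the identity component of the isometry group $G'=\Isom_0(\cX,g)$ is its compact real form, isomorphic to $SO(3)$. Three hypotheses of Theorem~\ref{theo} must be verified. The holomorphic $G$-action on $\cM\to B$, trivial on $B$, is part of the assumption. The reduced scalar curvature $s^G_g$ vanishes because $g$ is K\"ahler-Einstein, hence cscK with vanishing Futaki invariant. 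The only nontrivial point is the nondegeneracy of the relative Futaki invariant.

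For nondegeneracy I argue that $\fp_0/\fg_0=0$ in both cases, which makes the condition vacuous. When $G$ is the dihedral group of order $8$ inside $SO(3)$, we have $\fg=0$ and the group normalizer $N_{SO(3)}(G)$ is the discrete dihedral group $D_{16}$, so $\fp=0$. When $G=\SS^1\rtimes\ZZ/2$ is realized as $O(2)\subset SO(3)$, we have $\fg=\mathfrak{so}(2)$ and $N_{SO(3)}(O(2))=O(2)$, so $\fp=\fg$. In either case $\fp_0/\fg_0=0$.

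Theorem~\ref{theo} then produces, for each small $t$, an $h^{1,1}(\cX)=1$-dimensional family of $G$-invariant extremal K\"ahler metrics on $\cM_t$ with K\"ahler classes close to $\Theta_t=c_1(\cM_t)$. Since $\cX$ has Picard number one, semicontinuity gives $h^{1,1}(\cM_t)=1$ and the K\"ahler cone of $\cM_t$ is the positive ray generated by $c_1(\cM_t)$, so rescaling (or the more precise statement of Theorem~\ref{theotech}) yields an extremal representative $g_t$ of $c_1(\cM_t)$ exactly. It then remains to promote $g_t$ to a K\"ahler-Einstein metric. Since $g_t$ is extremal and $G$-invariant, $\grad s_{g_t}$ is a $G$-invariant holomorphic vector field on $\cM_t$. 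A representation-theoretic computation shows $\fh_0(\cX)^G=\mathfrak{sl}(2,\CC)^G=0$ in both cases: the order-$4$ rotation in the dihedral group fixes no nonzero vector of the standard $SO(3)$-representation, and for $O(2)$ the one-dimensional $\SS^1$-invariant subspace $\mathfrak{so}(2)\subset\mathfrak{su}(2)$ is reversed by the $\ZZ/2$ factor. Upper semicontinuity of $G$-invariant global sections of the relative tangent sheaf forces $\fh_0(\cM_t)^G=0$ for all small $t$, so $\grad s_{g_t}=0$, meaning $g_t$ is cscK; since $\cM_t$ is Fano and $g_t\in c_1(\cM_t)$, $g_t$ is K\"ahler-Einstein.

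The main subtlety in this plan is the correct interpretation of $\fp$ when $G$ is not connected: it must be taken as the Lie algebra of the group normalizer $N_{G'}(G)$, which in these discrete cases is much smaller than the Lie algebra normalizer $\{X\in\fg' : [X,\fg]\subset\fg\}$, and only with this interpretation does the nondegeneracy hold vacuously. The other nontrivial ingredient is the passage from extremal to K\"ahler-Einstein via the upper-semicontinuity argument on $G$-invariant vector fields.
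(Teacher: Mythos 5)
Your proposal is correct and follows essentially the same route as the paper: Donaldson's K\"ahler--Einstein metric gives $s^G_g=0$, the nondegeneracy of the relative Futaki invariant holds vacuously because there are no nonzero $G$-invariant holomorphic vector fields (your normalizer and representation-theoretic computations are equivalent to the paper's observation that $G$ fixes no point of $\PP^1$), and the same vanishing on the deformed fibers upgrades the resulting extremal metrics in $c_1(\cM_t)$ to K\"ahler--Einstein ones. The only cosmetic differences are that the paper invokes Theorem~\ref{theotech} with $V=0$ to remain in the class $\Theta_t$ where you use $h^{1,1}=1$ plus rescaling, and it handles the disconnectedness of $G$ via the modified nondegeneracy condition of \S\ref{nonconnected}, which is exactly the subtlety you flag.
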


The result above was proved already in \cite{Don08} using a different
approach which was later refined by Sz\'ekelyhidi~\cite{sz}. Using
this method it is possible to understand general small complex
deformations of the Mukai-Umemura $3$-fold. It turns out that small deformations corresponding to
polystable orbits under the action of $\PSL(2,\CC)$ are exactly the
one which carry Kaehler-Einstein metrics.

\subsection{Plan of the paper}
In \S \ref{s2} we recall further details and relevant facts about extremal 
metrics and real holomorphy potentials and relative Futaki invariant 
that we shall used throughout our
work. We proceed in \S \ref{deformations} to provide detailed 
definitions of complex and polarized deformations in a way suitable to
our work. The deformation problem that we treat by the implicit function 
theorem is presented in \S \ref{s4}, where we prove an expanded version of our
main Theorem \ref{theo}. The applications are given in \S\ref{sec:appli}. We 
begin
in \S \ref{lst} comparing our result to the LeBrun-Simanca deformation theory, 
and put the new contributions in proper perspective. We then study
deformations with maximal torus symmetry in \S \ref{tde}, where we 
prove Corollary \ref{co2}.
The Mukai-Umemura manifold and its deformations are discussed in \S \ref{mu3},
where we prove Corollary \ref{cor:mu1}.

\subsection{Acknowledgments}
We would like to thank Professor Simon Donaldson for pointing out an
error in the attempt to apply our deformation theory to the
Mukai-Umemura $3$-fold in an earlier version of this work. We also thank
Professor Paul Gauduchon for making the 
preliminary version of his book \cite{gbook} available to us.

\section{Extremal metrics}\label{s2}
Let $(\mc{X},\Omega)$ be a closed polarized complex manifold of 
dimension $n$. The polarizing assumption on the class $\Omega$ may be stated 
by saying that there exist K\"ahler metrics $g$ whose K\"ahler form 
$\omega_g$ represents it. We let ${\mathfrak M}_{\Omega}$ be the set of all 
such metrics. If $g\in {\mathfrak M}_{\Omega}$,
the projection of the scalar curvature $s_g$ onto the constants is given by
\begin{equation}
s_\Omega=4n\pi\frac{c_1 \cup \Omega^{n-1}}{\Omega^n}\, ,\label{s0}
\end{equation}
and there exists a vector field $X_{\Omega}\in {\mathfrak h}_0\subset
{\mathfrak h}$ \cite{fut,fm} such that for every $g\in {\mathfrak
  M}_{\Omega}$ we have
\begin{equation}
\int_\cX s_g^2 d\mu_g \geq s_{\Omega}^2\frac{\Omega^n}{n!}-
{\mathfrak F}(X_{\Omega},\Omega)\, . \label{lb}
\end{equation}
The equality is achieved if, and only if, the metric is extremal.
This lower bound was known earlier \cite{hw,si} for metrics in
${\mathfrak M}_{\Omega}$ that are invariant under a maximal compact subgroup 
of ${\rm Aut}(\mc{X})$. It was proven by Chen \cite{ch} to hold in general,
for any metric in ${\mathfrak M}_{\Omega}$. The lower bound varies smoothly
as a function of the polarizing class $\Omega$ \cite{si}.

\subsection{Holomorphic vector fields}
The subset ${\mathfrak h}_0$ of holomorphic vector 
fields with zeroes is an ideal of ${\mathfrak h}$, and the quotient 
algebra ${\mathfrak h}/{\mathfrak h}_0$ is Abelian.
A smooth complex-valued function $f$ gives rise to the $(1,0)$-vector field 
$f\mapsto \partial^{\#}_g f$ defined by the expression
$$
g(\d_g^{\#} f, \hspace{1mm}\cdot \hspace{1mm})=\db f \, .
$$
This vector field is holomorphic if, and only if
$\over{\partial}\partial_g^{\#} f=0$, a condition equivalent 
to $f$ being in the kernel of the \emph{Lichnerowicz operator}
\begin{equation}
L_g f:= (\over{\d}{\d}_g^{\#})^{\ast}\over{\d}{\d}_g^{\#}f= 
\frac{1}{4}\Delta_g^2 f + 
\frac{1}{2}r ^{\mu\nu}\n_{\mu}\n_{\nu}f+ \frac{1}{2}(\n^{\over{\ell}}s_g )
\n_{\over{\ell}}f \, .\label{lic}
\end{equation}

The ideal ${\mathfrak h}_0$ consists of vector fields of the
form $\partial^{\#}_g f$, for a function $f$ in the kernel of $L_g$. Or put
differently, a holomorphic vector field $\Xi$ can be written
as $\partial_g^{\#}f$ if, and only if,
the zero set of $\Xi$ is nonempty \cite{ls2}. The kernel $\mc{H}_g$ of
$L_g$ is called the {\it space of holomorphy potentials} of $(\cX,g)$.
Since $L_g$ is elliptic, $\cH_g$ is finite dimensional complex vector
space and consists of
smooth functions.

Generally speaking, the Lichnerowicz operator $L_g$ of a metric $g$ is not
a real operator, and so the real and imaginary parts of a function in its 
kernel do not have to be elements of the kernel also. In studying the geometry 
of K\"ahler manifolds, it is often convenient to use real quantities and 
operators. We introduce the terminology that allows us to do so here.
We follow the conventions of \cite{be,gbook}.

Let us consider the operator $d^c$ on functions defined by $d^c=Jd$.
Given a real holomorphic vector field $\Xi$ on $\cX$, the  
Hodge theory provides a unique decomposition 
$\xi=\Xi^\flat=\xi_{h}+d u_\Xi + d^c v_\Xi$, where $\xi_{h}$ is a harmonic 
$1$-form,  $u_\Xi$, $v_\Xi$ are
real valued functions, and $d^c v_\Xi$ is coclosed. The functions 
$u_\Xi$, $v_\Xi$ are uniquely determined up to an additive constant.
The dual of this identity gives rise to the decomposition
$\Xi=\Xi_{h}+ \grad u_\Xi + J \grad v_\Xi$, and it follows that 
$\Xi^{1,0}=\frac 12(\Xi-iJ\Xi) =\Xi^{1,0}_{h}+\del_g^\# f_\Xi$, where
$f_\Xi=u_\Xi +iv_\Xi$. It follows that $\Xi$ is real holomorphic if, 
and only if, $\Xi^{1,0}$ is complex holomorphic, that is to say, if, and 
only if, $L_gf_\Xi=0$. If we extend the Lie bracket operation linearly in each 
component, the integrability of the complex structure
makes of the map $\Xi \rightarrow \Xi^{1,0}$ an isomorphism of Lie algebras.

As indicated above, the condition $\Xi_{h}=0$ is 
equivalent to $\Xi$ being Hamiltonian, and characterizes $\fh_0(\cX)$. 
If $\Xi \in \fh_0(\mc{X})$, the potential $f_\Xi$ of $\Xi$ 
is a solution of the equation $\cL_{\Xi^{1,0}}\omega_g = \frac 12 dd^c f$. 
On the other
hand, a holomorphic vector field $\Xi$ is parallel if, and only if, $\Xi$
is the dual of a harmonic $1$-form.
   
A vector field $\Xi$ is a Killing field if, and only if, it is holomorphic 
and its potential function $f_\Xi=iv_{\Xi}$ is, up to a constant, a purely 
imaginary function \cite{ls2}. In that case, $v_\Xi$ will be called the 
Killing potential of $\Xi$. 

If $\xi$ is a $1$-form, we let $\nabla^-\xi$ be the $J$ anti-invariant 
component of the $2$-tensor $\nabla^g \xi$. Let $\xi= \Xi^\flat$ for a real
vector field $\Xi$. The condition $\nabla^-\xi = 0$ is equivalent to 
$\Xi$ being a real holomorphic vector field, and can be expressed as
$\delta\delta\nabla^-\xi =0$.

We have the identities 
\begin{equation}
  \label{eq:ddnablater}
   \delta\nabla^-\xi= \frac 12 \Delta_g \xi - J\xi^\sharp\hok \rho_g
\end{equation}
and 
\begin{equation}  \label{eq:ddnabla}
  \delta\delta\nabla^-\xi=\frac 12 \Delta_g \delta\xi - \< d^c\xi,\rho_g \> +
  \frac 12\< \xi,ds_g \>\, ,
\end{equation}
respectively.

We introduce the operator $\LL_g$ defined by 
$$
\LL_g f=  (\nabla^-d)^*(\nabla^-d) f = \delta\delta \nabla^-df\, .
$$
This is a real elliptic operator of order four, and if $f$ is a real valued 
function, we have that 
$\LL_g f=0$ if, and only if, $\grad f$ is a 
real holomorphic vector field; by the observations above, every 
Hamiltonian Killing field is of the form $J\grad f$ for a function $f$ of 
this type. 

By (\ref{eq:ddnabla}) we derive the identities
\begin{equation}
  \label{eq:ddnablabis}
\begin{array}{rcl}
   \LL_g f & = &  \frac 12 \Delta_g^2 f+\la dd^c f,\rho_g\ra +
\frac 12 \la df,ds_g \ra\, , \vspace{1mm} \\
 \delta\delta\nabla^-d^c f & = & - \frac 12
  \cL_{J\grad s_g} f\, , 
\end{array}
\end{equation}
and by (\ref{lic}), we see that 
\begin{equation}
  \label{eq:decomp}
2L_g f= \LL_g f + \frac i2 \cL_{J\grad s_g}f\, .
\end{equation}

\begin{lemma}
For any K\"ahler metric $g$, the space of real solutions of the equation
$\LL_g f=0$ coincide with the space of real solutions of $L_g f=0$.
\end{lemma}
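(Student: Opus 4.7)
The plan is to work directly from the decomposition identity~(\ref{eq:decomp}), namely
$$2 L_g f = \LL_g f + \frac{i}{2}\cL_{J\grad s_g} f.$$
For a real function $f$, the first summand is real (since $\LL_g$ is a real operator) and the second is purely imaginary (since $\cL_{J\grad s_g}f$ is a real-valued derivative of a real function). So equating real and imaginary parts, a real $f$ satisfies $L_g f = 0$ if and only if the two real equations $\LL_g f = 0$ and $\cL_{J\grad s_g} f = 0$ both hold. This already gives the inclusion from real solutions of $L_g f = 0$ into real solutions of $\LL_g f = 0$ for free.

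The content is the reverse inclusion: a real $f$ with $\LL_g f = 0$ automatically satisfies $\cL_{J\grad s_g} f = 0$. Here I would use the characterization, already recorded in the paper just before the lemma, that $\LL_g f = 0$ is equivalent to $\grad f$ being a real holomorphic vector field, and that then $J\grad f$ is a Hamiltonian Killing vector field.

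The rest is a one-line computation. Since $J\grad f$ is Killing it preserves every Riemannian invariant of $g$, in particular the scalar curvature: $(J\grad f)\cdot s_g = 0$. Using the fact that $J$ is $g$-skew, i.e.\ $g(JA,B) = -g(A,JB)$, one rewrites
$$\cL_{J\grad s_g} f = df(J\grad s_g) = g(\grad f,\, J\grad s_g) = -g(J\grad f,\, \grad s_g) = -(J\grad f)\cdot s_g = 0.$$
Combined with $\LL_g f = 0$ and the decomposition~(\ref{eq:decomp}), this gives $L_g f = 0$, completing the equivalence.

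There is no real obstacle; the only thing to be careful about is the sign conventions (the skew-adjointness of $J$ with respect to $g$, and the formula $\cL_X f = df(X)$), so that the two real components of $2L_g f$ are correctly identified with $\LL_g f$ and $\tfrac12 \cL_{J\grad s_g} f$.
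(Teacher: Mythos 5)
Your proof is correct and follows essentially the same route as the paper: both directions rest on the splitting of $2L_g f$ into its real part $\LL_g f$ and imaginary part $\tfrac12\cL_{J\grad s_g}f$ from~(\ref{eq:decomp}), and the converse uses that $\LL_g f=0$ makes $J\grad f$ a Killing field, hence $\cL_{J\grad s_g}f=-\cL_{J\grad f}\,s_g=0$. The paper phrases the final computation with the pairing $\la d^c s_g, df\ra = -\la ds_g, d^c f\ra$ where you use the $g$-skewness of $J$ on vector fields, but these are the same identity.
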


\begin{proof}
By (\ref{eq:decomp}), if $f$ is a real valued function 
such that $L_g f=0$, then $\LL_g f= 0$ and 
$\cL_{J\grad s_g}f=0$. Conversely, let us assume that $\LL_g f=0$. This 
implies that $\Xi=J\grad f $ is a Killing field. Then
$$
\cL_{J\grad s_g} f = \la d^cs_g,df\ra= -\la ds_g , d^c f\ra = -\cL_\Xi 
s_g =0\, , 
$$
and by (\ref{eq:decomp}), we conclude that $L_g f=0$.
\end{proof}

\subsection{The group of isometries of a K\"ahler metric}
\label{sec:kgi}
Let $(\cX,\Omega)$ be a polarized complex manifold and $G$, $G'$ be
connected compact Lie subgroups of ${\rm Aut}(\cX)$, with $G\subset G'$. 
By taking a $G'$-average if necessary, we represent the polarization 
$\Omega$ the K\"ahler form $\omega_g$ of a $G'$-invariant K\"ahler metric
$g$. 
We attach to this data a relative Futaki invariant.

\subsubsection{Lie algebras}
\label{sec:liealg}
The Lie algebras $\fg$ and $\fg'$ of $G$ and $G'$ are naturally identified 
with subalgebras of the algebra $\fh(\cX)$ of holomorphic vector fields. 
We define $\fg_0=\fg\cap\fh_0(\cX)$ and $\fg'_0=\fg' \cap \fh_0(\cX)$.
We then introduce the Lie algebras 
\begin{itemize}
\item $\fz=Z(\fg)$, the center of $\fg$,
\item  $\fz'=C_{\fg'}(\fg)$, the centralizer of
$\fg$ in $\fg'$, 
\item  $\fz''=C_{\fh}(\fg)$, the centralizer of
$\fg$ in $\fh$, 
\item  $\fp=N_{\fg'}(\fg)$, the normalizer of $\fg$ in $\fg'$, and
\item  $\fq=N_{\fh}(\fg)$, the normalizer of $\fg$ in $\fh$. 
\end{itemize}
If $\ft$ is any of these Lie algebras, we shall denote by $\ft_0$
the ideal of Hamiltonian vector fields $\ft_0=\ft\cap\fh_0(\cX)$, and by
$\cH_g^{\ft_0}$ the corresponding spaces of holomorphy
potentials.

The space of holomorphy potentials $\cH^{\fz_0}_g$ (respectively
 $\cH^{\fz'_0}_g$, $\cH^{\fz''_0}_g$) is identified to the
$G$-invariant potentials of $\cH^{\fg_0}_g$ (respectively
 $\cH^{\fg_0'}_g$, $\cH_g$). Notice that $\cH^{\fz_0}_g \subset
 \cH^{\fg_0}_g$ and $\cH^{\fz'_0}_g \subset \cH^{\fg'_0}_g$ 
consists of purely imaginary functions whose imaginary parts define
Killing potentials.

Now $\fz$ is an ideal of $\fz'$ and $\fz''$. On the other hand, $\fg$
is an ideal of $\fp$ and $\fq$. We have canonical injections
\begin{equation}
  \label{eq:caninc}
\fz'/\fz \hookrightarrow \fp/\fg, \quad    \fz'_0/\fz_0 \hookrightarrow 
\fp_0/\fg_0,\quad
\fz''/\fz \hookrightarrow \fq/\fg, \quad    \fz''_0/\fz_0 \hookrightarrow 
\fq_0/\fg_0\, .
\end{equation}

\begin{lemma}
\label{lemma:isomlie}
The canonical injections \eqref{eq:caninc} are surjective, and we 
have canonical isomorphisms of Lie algebras
$$
\fz'/\fz \simeq \fp/\fg,\quad \fz'_0/\fz_0 \simeq \fp_0/\fg_0, \quad 
\fz''/\fz \simeq \fq/\fg,\quad \fz''_0/\fz_0 \simeq \fq_0/\fg_0\, .
$$
\end{lemma}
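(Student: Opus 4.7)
The plan is to establish surjectivity of each of the four canonical injections in \eqref{eq:caninc}, since injectivity is immediate from the identifications $\fg \cap \fz' = Z(\fg) = \fz$ and $\fg \cap \fz'' = \fz$, together with their intersections with $\fh_0$ (which give $\fz_0$).

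For the two non-Hamiltonian isomorphisms, I would exploit the compactness of $G$ to produce a reductive decomposition. A Jacobi-identity computation shows that $\fp = N_{\fg'}(\fg)$ and $\fq = N_{\fh}(\fg)$ are $\mathrm{ad}(\fg)$-invariant real subspaces; since $G$ is compact and connected, this is equivalent to $\mathrm{Ad}(G)$-invariance, and averaging a positive-definite inner product over $G$ yields $\mathrm{ad}(\fg)$-invariant complements $\fp = \fg \oplus \fm$ and $\fq = \fg \oplus \fm'$. For $X \in \fp$ written as $X_\fg + X_\fm$, the condition $[X, \fg] \subset \fg$ together with $[X_\fm, \fg] \subset \fm$ and the directness of the sum forces $[X_\fm, \fg] = 0$, whence $X_\fm \in C_{\fg'}(\fg) = \fz'$. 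Hence $\fp = \fg + \fz'$, and the second isomorphism theorem gives $\fp/\fg \simeq \fz'/(\fg \cap \fz') = \fz'/\fz$. The identical argument inside $\fh$ yields $\fq/\fg \simeq \fz''/\fz$.

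For the Hamiltonian versions, the additional ingredient is that $\fh/\fh_0$ is abelian, so $[\fg, \fg] \subset \fh_0 \cap \fg = \fg_0$. Since $\fg$ is a compact, hence reductive, Lie algebra, we have the direct sum of ideals $\fg = \fz \oplus [\fg, \fg]$, and therefore the composition $\fz \hookrightarrow \fg \to \fg/\fg_0$ is surjective with kernel $\fz \cap \fg_0 = \fz_0$, giving $\fz/\fz_0 \simeq \fg/\fg_0$. Given $X \in \fp_0$, one decomposes $X = X_\fg + X_{\fz'}$ as above and chooses $Z \in \fz$ with $X_\fg - Z \in \fg_0$; then $X_{\fz'} + Z = X - (X_\fg - Z)$ lies in $\fh_0 \cap \fz' = \fz'_0$, so $X \in \fg_0 + \fz'_0$. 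Combined with $\fg_0 \cap \fz'_0 = \fz \cap \fh_0 = \fz_0$, this yields $\fp_0/\fg_0 \simeq \fz'_0/\fz_0$, and the same argument gives $\fq_0/\fg_0 \simeq \fz''_0/\fz_0$.

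The main subtlety lies in the Hamiltonian case, where one must juggle two conditions simultaneously: membership in the centralizer and membership in $\fh_0$. The resolution rests on the combination of the abelianness of $\fh/\fh_0$ with the reductive structure of $\fg$: the correction needed to push the centralizer-component into $\fh_0$ may always be chosen inside $\fz$, which preserves membership in $\fz'$ or $\fz''$.
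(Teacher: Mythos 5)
Your proposal is correct, but it proves the lemma by a genuinely different, purely Lie--algebraic route. The paper works with the geometric realization of the vector fields: it writes a Killing field $\Xi\in\fg'$ as $\Xi_{h}+J\grad v_\Xi$ via Hodge theory, translates the normalizer condition $[\fg,\Xi]\subset\fg$ into the statement that $Y\cdot v_\Xi$ is a holomorphy potential for every $Y\in\fg$, and then averages the \emph{potential function} $v_\Xi$ over the compact group $G$ to split $\Xi$ into a $G$-invariant (hence centralizing) piece plus a piece $J\grad\hat v\in\fg$; the Hamiltonian refinement comes for free because both summands are manifestly of the form $J\grad(\cdot)$ when $\Xi_h=0$. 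You instead average an inner product on the finite-dimensional normalizer to obtain an $\mathrm{ad}(\fg)$-invariant complement $\mathfrak{m}$ of $\fg$, observe that $[\mathfrak{m},\fg]\subset\mathfrak{m}\cap\fg=0$ forces $\mathfrak{m}$ into the centralizer, and then handle the Hamiltonian versions through $\fg=\fz\oplus[\fg,\fg]$ together with $[\fg,\fg]\subset\fg\cap\fh_0=\fg_0$ (abelianness of $\fh/\fh_0$), which lets you correct the $\fg$-component by an element of $\fz$ without leaving the centralizer. Your argument is the abstract statement that a normalized compact subalgebra splits the normalizer as centralizer plus subalgebra, and it makes no use of the K\"ahler or Hodge structure; its one extra ingredient is the reductive decomposition of the compact Lie algebra $\fg$, needed because the decomposition $\fp=\fg+\fz'$ you produce is not a priori compatible with $\fh_0$. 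The paper's argument is less portable but sets up the holomorphy-potential bookkeeping ($\cH_g^{\fg_0}$, averaging of potentials) that is reused throughout the rest of the paper. Both hinge on the same mechanism --- averaging over the compact group $G$ --- applied to different objects.
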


\begin{proof}
 We prove that $\fp=\fz'+ \fg$. 
Let $\Xi$ be a Killing field in $\fg'$. We have that
$$
\Xi= \Xi_{h} + J \grad v_{\Xi} 
$$
where $\Xi_{h}$ is the dual vector field of a harmonic $1$-form, and 
$v_{\Xi}$ is a real function. The vector field $\Xi$ belongs to $\fp$ if,
and only if,
 \begin{equation}
   \label{eq:normal}
[Y,\Xi]=\cL_Y\Xi \in \fg \quad\text{for all $Y\in \fg$.}   
 \end{equation}
Since $\cL_Y\Xi_{h}=0$ because $Y$ is Killing, this condition
is equivalent to $\cL_YJ\grad v_{\Xi} \in \fg$. In turn, this is equivalent to 
having $J\grad (Y\cdot v_{\Xi}) \in \fg$ since $Y$ preserves $J$ 
and the metric, which means that $Y\cdot v_{\Xi} \in i\cH_g^{\fg_0}$
for every $Y\in \fg$. This implies that for every $\gamma\in G$, we have
$\gamma^*v_{\Xi}- v_{\Xi} \in i\cH_g^{\fg_0}$. We can then 
average the function $v_{\Xi}$ under the group
action to obtain a $G$-invariant function $\tilde{v}_{\Xi}$ on $\cX$ such
that $\hat v= v_{\Xi}- \tilde{v}_{\Xi} \in i\cH_g^{\fg_0}$, and
$$
\Xi= (\Xi_{h} +  J \grad \tilde{v}_{\Xi}) + J\grad \hat v\, ,
$$
where $J\grad\hat v \in \fg$ and $ \Xi_{h} + J\grad \tilde{v}_{\Xi} =
\Xi -  J \grad \hat v \in \fg'$ is $G$-invariant and so an element of
$\fz'$.

An analogous argument shows that $\fq = \fz''+\fg$, which finishes 
the proof. 
\end{proof}

\subsubsection{The reduced scalar curvature}
\label{sec:reducedscal}
Let $L^{2}_{k}(\cX)$ be the $k$th Sobolev space defined by $g$. The space 
$L^2_{k,G}(\cX)$ of $G$-invariants functions in 
$L^{2}_{k}(\cX)$ can be obtained as metric completion of $C^\infty_{G}(\cX)$.
If $k>n$, $L^{2}_{k}(\cX)$ is a Banach algebra. In fact, the
Sobolev embedding theorem says that if $k>n+l$ we have  
$L^{2}_{k}(\cX)$ continuously contained in $C^{l}(\cX)$, the space of 
functions with continuous derivatives of order at most $l$. We shall work 
below always imposing this restriction over $k$.

The $L^2$-Hermitian product  on $L^2_{k,G}(\cX)$ induced  by the Riemannian
metric $g$ allows us to define the orthogonal
$W_{k,g}$ of $i\cH^{\fz_0}_g$. Thus we have an orthogonal decomposition
$$
L^2_{k,G}(\cX)= i\cH^{\fz_0}_g \oplus  W_{k,g}
$$
together with the associated projections 
$$
\pi^W_g: L^2_{k,G}(\cX) \to  W_{k,g} \quad
\mbox { and }\quad \pi^G_g: L^2_{k,G}(\cX) \to  i\cH^{\fz_0}_{g}.
$$
We introduce the \emph{reduced scalar curvature} $s^G_g$
defined by
$$
s^G_g=\pi^W_g(s_{g}) =
(\BOne -\pi^G_g)(s_{g})
$$
By construction, the condition
\begin{equation}
s^G_g=0  
\end{equation}
is equivalent to $s_g\in i\cH^{\fz_0}_g$, and since 
$\cH^{\fz_0}_g\subset \cH_g$, this condition implies the extremality of
 the  metric $g$.

\subsubsection{The reduced Ricci form}
Let $L^2\Lambda^{1,1}_{k,G}(\cX)$ be the space of real
$G$-invariant $(1,1)$-forms on $\mc{X}$ in $L^2_{k}$. We lift the projection
$\pi^G_g$ to the a projection 
$$
\Pi^G_g: L^2\Lambda^{1,1}_{k+2,G}(\cX)\to L^2\Lambda^{1,1}_{k,G}(\cX)
$$
defined by
$$
\Pi^G_g\beta = \beta +  dd^c f \, ,
$$
where $f= G_g(\pi^W_g(\omega_g,\beta))$ and $G_g$ is the Green operator of $g$
\cite{si2, si0, lusa}. 
It follows that
$(\omega_g, \Pi^G_g(\beta)) = \pi^G_g (\omega_g ,\beta)$.

We apply this projection map to the Ricci form, and define the reduced
Ricci form \cite{si0}
$$
\rho^G := \Pi^G\rho\, .
$$
We then obtain the identity
\begin{equation}
  \label{eq:redricci}
\rho_g = \rho^G + \frac 12 dd^c \psi_g^G,  
\end{equation}
where $\psi_g^G=2G_g(\pi_g^W(\omega_g, \rho))= G_g\pi_g^W(s_g)=G_g(s^G_g)$. 
In particular, $\rho^G=\rho_g$ if, and only if, $s_g^G=0$.

\subsubsection{Variational formulas}
Given a K\"ahler metric $g$, we consider infinitesimal 
deformations of it given by 
$$
\omega_{t}=\omega_g+t\alpha +t dd^c\phi
$$
of the K\"ahler class $\omega_g$, where $\phi$ is a smooth function and
$\alpha$ is a $g$-harmonic $(1,1)$-form, respectively. These variations
have K\"ahler classes given by $[\omega_g]+t[\alpha]$. In order to avoid
confusion, the derivative at $g$ of various geometric quantities 
will be denoted by $D_g$ below. Sometime later, and when confusion cannot 
occur, these same derivatives will be denoted by a dot superimposed on the 
quantities themselves.

The holomorphy potential of a holomorphic vector field depends upon
the choice of the metric. Its infinitesimal variation when moving the metric
in the direction of $\phi$ or $\alpha$ is described in
the following lemma:

\begin{lemma}
\label{lemma:varpot}
Let $\Xi$ be a real holomorphic vector field in $\fh_0(\cX)$, with holomorphy
potential $f_\Xi=u_\Xi+iv_\Xi\in\cH_g$. The variation $D_g f_\Xi 
(\phi)$ of $f_{\Xi}$ at $g$ in the direction of $\phi$ is given by
$$
D_{g}f_\Xi (\phi )= 2\cL_{\Xi^{1,0}}f_\Xi\, ,
$$
which is equivalent to the expressions $(D_g u_\Xi )(\phi)=\cL_\Xi \phi$ 
and $(D_g v_\Xi )(\phi)=-\cL_{J\Xi} \phi$ for the variations of the real
and imaginary parts of $f_{\Xi}$. The variations $D_g u_\Xi (\alpha)$ and
$D_g v_\Xi (\alpha)$ at $g$ in the direction of a trace-free form $\alpha$ are 
given by 
$$
\begin{array}{rcl}
D_g u_\Xi (\alpha) & = &  -G_g(\delta(J\Xi\hok \alpha))\, , \\
D_g v_\Xi (\alpha) & = & G_g(\delta(\Xi\hok \alpha))\, .
\end{array}
$$
\end{lemma}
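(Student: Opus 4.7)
The plan is to differentiate the defining identity $\cL_{\Xi^{1,0}}\omega_t=\tfrac12 dd^c f_\Xi^{(t)}$ for the holomorphy potential with respect to the deformation parameter of the family $\omega_t=\omega_g+t\alpha+t dd^c\phi$, and then solve the resulting linearized equation for $D_g f_\Xi$. It is cleanest to work with the equivalent real identities
\begin{equation*}
\cL_\Xi\omega_g=dd^c u_\Xi,\qquad \cL_{J\Xi}\omega_g=-dd^c v_\Xi,
\end{equation*}
obtained by splitting $\cL_{\Xi^{1,0}}=\tfrac12(\cL_\Xi-i\cL_{J\Xi})$ into real and imaginary parts, and then recombining at the end.

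For the $\phi$-direction the argument is essentially automatic. Differentiating $\cL_\Xi\omega_t=dd^c u_\Xi^{(t)}$ at $t=0$ produces $\cL_\Xi(dd^c\phi)=dd^c D_g u_\Xi(\phi)$. Since $\Xi$ is holomorphic, $\cL_\Xi$ preserves $J$ and therefore commutes with both $d$ and $d^c=Jd$, so the left-hand side rewrites as $dd^c\cL_\Xi\phi$. The resulting identity $dd^c\bigl(D_g u_\Xi(\phi)-\cL_\Xi\phi\bigr)=0$ on the compact K\"ahler manifold $\cX$ forces the bracketed function to be constant, and this constant is absorbed by the additive ambiguity in the normalization of the potential. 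Repeating the argument with $J\Xi$ in place of $\Xi$ yields $D_g v_\Xi(\phi)=-\cL_{J\Xi}\phi$, and combining the two recovers the complex formula $D_g f_\Xi(\phi)=2\cL_{\Xi^{1,0}}\phi$.

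The $\alpha$-direction is the substantive step. Differentiation yields $\cL_\Xi\alpha=dd^c D_g u_\Xi(\alpha)$, and since $\alpha$ is closed Cartan's formula rewrites the left-hand side as $d(\Xi\hok\alpha)$. To invert $dd^c$ I would contract both sides with $\omega_g$: the K\"ahler identity $\Lambda dd^c=\pm\Delta_g$ on functions turns the right-hand side into $\pm\Delta_g D_g u_\Xi(\alpha)$, while the K\"ahler identity $[\Lambda,d]=\pm(d^c)^*$ on $1$-forms, combined with the primitivity $\Lambda(\Xi\hok\alpha)=0$ inherited from $\Lambda\alpha=0$, reduces the left-hand side to $\pm(d^c)^*(\Xi\hok\alpha)$. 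Since $J$ is an isometry of $g$ with $J^2=-1$, one has $J^*=-J$ and hence $(d^c)^*=-\delta J$; moreover the $J$-invariance of $(1,1)$-forms, $\alpha(JX,JY)=\alpha(X,Y)$, gives the pointwise identity $J(\Xi\hok\alpha)=J\Xi\hok\alpha$. The right-hand side therefore takes the form $\delta(J\Xi\hok\alpha)$ up to sign, and because this is a divergence of a $1$-form it has zero integral over $\cX$, so the Green operator $G_g$ may be applied to invert $\Delta_g$, producing $D_g u_\Xi(\alpha)=-G_g\bigl(\delta(J\Xi\hok\alpha)\bigr)$. The formula for $v_\Xi$ is obtained by the same procedure starting from $\cL_{J\Xi}\omega_g=-dd^c v_\Xi$; the extra sign together with the identity $J(J\Xi\hok\alpha)=J^2\Xi\hok\alpha=-\Xi\hok\alpha$ combine to yield $D_g v_\Xi(\alpha)=G_g\bigl(\delta(\Xi\hok\alpha)\bigr)$.

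The principal technical hurdle is the $\alpha$-direction, where one must carefully track the K\"ahler identities relating $d$, $d^c$, and $\Lambda$, and exploit the primitivity of $\alpha$ together with the $J$-invariance of $(1,1)$-forms, in order to convert the Lie-derivative expression into a divergence suitable for inversion by the Green operator; sign bookkeeping between the two conventions $(d^c)^*=-\delta J$ and $J(\Xi\hok\alpha)=J\Xi\hok\alpha$ is what ultimately produces the precise coefficients in the two formulas. The $\phi$-direction, by contrast, amounts to the single observation that $\cL_\Xi$ commutes with $dd^c$ whenever $\Xi$ is holomorphic.
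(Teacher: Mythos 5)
Your proof is correct; the paper states this lemma without proof, and your derivation --- differentiating the defining identity $\cL_{\Xi^{1,0}}\omega_t=\tfrac12 dd^c f^{(t)}_\Xi$ in each direction, using Cartan's formula and the K\"ahler identities to convert $\cL_\Xi\alpha=d(\Xi\hok\alpha)$ into a divergence, and inverting the Laplacian by the Green operator --- is exactly the standard argument (as in LeBrun--Simanca and Gauduchon) that the authors are implicitly invoking. Note that your computation also implicitly corrects a typo in the statement: the displayed formula $D_g f_\Xi(\phi)=2\cL_{\Xi^{1,0}}f_\Xi$ should read $2\cL_{\Xi^{1,0}}\phi$, as the real and imaginary parts $\cL_\Xi\phi$ and $-\cL_{J\Xi}\phi$ confirm.
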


The variation of the reduced scalar curvature $s^G_g$ is described by:

\begin{lemma}
\label{lemma:diff1}
Let $g$ be a $G$-invariant K\"ahler metric on $\cX$.
The variation of the reduced scalar curvature $s^G_g$ when moving the metric 
in the direction of $\phi$ is given by
$$
(D_g s^G_g)(\phi)= - 2 \LL_g \phi + \la d\phi,ds_g^G\ra\, .
$$
If $s^G_g=0$, the variation of $s^G_g$ when varying the metric in the 
direction of a trace-free form $\alpha$ is given by
$$
(D_g s^G_g)(\alpha)= \pi^W(G_g(\la \alpha,dd^c s_g\ra) -
2\la \alpha,\rho_g \ra).
$$
\end{lemma}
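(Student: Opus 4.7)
The plan is to prove both formulas by differentiating the decomposition $s_g = s_g^G + \pi^G_g s_g$ at $g$ and then invoking known variation formulas for the scalar curvature and for holomorphy potentials. In each direction the variation of the full scalar curvature $s_g$ is classical, so the heart of the calculation is to track how the projection $\pi^G_g s_g$ varies; the subtlety is that both the image $i\cH_g^{\fz_0}$ and the $L^2$ pairing defining $\pi^G_g$ depend on $g$.

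For the $\phi$-variation, the classical potential formula for scalar curvature together with the expression~\eqref{eq:ddnablabis} for $\LL_g$ gives $D_g s_g(\phi) = -2\LL_g \phi + \la d\phi, ds_g\ra$. Setting $c_g = \pi^G_g s_g$, it then suffices to prove
\[
D_g c_g(\phi) = \la d\phi, dc_g\ra,
\]
since subtraction produces $-2\LL_g\phi + \la d\phi, ds_g^G\ra$. To establish this, note that at each metric $c_g$ is a Killing potential for some $\Xi\in\fz_0$, so Lemma~\ref{lemma:varpot} applies: the identity $(D_g v_\Xi)(\phi) = -\cL_{J\Xi}\phi = \la d\phi, dv_\Xi\ra$ captures exactly the infinitesimal change of a Killing potential for a fixed field under a $dd^c\phi$-perturbation. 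Expanding $c_g$ in a $g$-orthonormal frame of $i\cH_g^{\fz_0}$ and differentiating, the extra contributions from varying the frame and the $L^2$ pairing combine into a piece in $i\cH_g^{\fz_0}$ that vanishes by the orthogonality $s_g - c_g \perp_g i\cH_g^{\fz_0}$.

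For the $\alpha$-variation the hypothesis $s_g^G=0$ yields $s_g = c_g$, so $s_g$ is itself the Killing potential of $\Xi = J\grad s_g \in \fz_0$. The classical formula for the variation of $s_g$ under $\omega_t = \omega_g + t\alpha$ with $\alpha$ a trace-free harmonic $(1,1)$-form simplifies, modulo terms in $i\cH_g^{\fz_0}$ that $\pi^W$ annihilates, to $-2\la\alpha,\rho_g\ra$. For the projection, Lemma~\ref{lemma:varpot} applied to the fixed field $\Xi$ gives
\[
D_g v_\Xi(\alpha) = G_g\bigl(\delta(\Xi\hok\alpha)\bigr),
\]
and the standard K\"ahler identity $\delta(J\grad s_g\hok\alpha) = -\la\alpha, dd^c s_g\ra$ for a closed $(1,1)$-form rewrites the right-hand side as $\pm G_g(\la\alpha,dd^c s_g\ra)$. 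Subtracting the two variations and applying $\pi^W$ (which kills all the $i\cH_g^{\fz_0}$-valued discrepancies) yields the stated identity.

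The main obstacle in both parts is the $g$-dependence of the projection $\pi^G_g$: a naive differentiation produces extra terms coming from variation of the basis of $i\cH_g^{\fz_0}$ and of the $L^2$ pairing. The saving observation is that these extra terms necessarily lie in $i\cH_g^{\fz_0}$, where either the Lie-derivative identity of Lemma~\ref{lemma:varpot} absorbs them into the closed-form answer (Part 1) or the projection $\pi^W$ annihilates them outright (Part 2). Once this bookkeeping is carried out, no further analysis is needed beyond what Lemma~\ref{lemma:varpot} and the classical scalar-curvature variation already supply.
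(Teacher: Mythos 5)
The paper states Lemma~\ref{lemma:diff1} without proof, so there is no argument of the authors to compare against; judged on its own, your strategy (differentiate $s^G_g=s_g-\pi^G_g(s_g)$, feed in the classical variation of $s_g$ and Lemma~\ref{lemma:varpot} for the potentials) is the natural one, and your second part is essentially complete: when $s^G_g=0$, differentiating the relation $\la s^G_{g_t},w_t\ra_{g_t}=0$ for $w_t\in i\cH^{\fz_0}_{g_t}$ shows that $\dot s^G_g$ automatically lies in $W$, so the outer $\pi^W$ really does swallow every in-space discrepancy; the only loose end there is the unresolved sign in $\delta(J\grad s_g\hok\alpha)=\pm\la\alpha,dd^c s_g\ra$, which must be pinned down to obtain the stated formula rather than its negative.

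The genuine gap is in the first part. Writing $c_g=\pi^G_g(s_g)=a(g)+v^{g}_{\Xi(g)}$ with $(a(g),\Xi(g))\in\RR\oplus\fz_0$, Lemma~\ref{lemma:varpot} gives $\dot c=\la d\phi,dc_g\ra+(\dot a+v_{\dot\Xi})$, and the first identity of the lemma --- which carries no $\pi^W$ and is used later in exact form, e.g.\ in the invariance computation for $\fFc_{G,\omega}$ --- requires the in-space remainder $\dot a+v_{\dot\Xi}$ to \emph{vanish}, not merely to lie in $i\cH^{\fz_0}_g$. Your justification, that it ``vanishes by the orthogonality $s_g-c_g\perp_g i\cH^{\fz_0}_g$,'' does not accomplish this: orthogonality at the fixed metric constrains $c_g$ itself, not its derivative. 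What actually kills the remainder is that the pair $(a(g),\Xi(g))$ is constant to first order along $\omega_g+t\,dd^c\phi$: $a(g)=\bar s_g$ is topological, and $\Xi(g)$ is determined by $B_g(\Xi,Y)=\int_\cX v_Y\, s_g\,d\mu_g$ for $Y\in\fz_0$, where $B_g$ is the Futaki--Mabuchi bilinear form and the right-hand side is essentially the Futaki character --- both independent of the $G$-invariant representative of $\Omega$. Equivalently, one must differentiate the orthogonality relations and integrate by parts against $\LL_g v_Y=0$, which is exactly the Futaki/Futaki--Mabuchi invariance computation. Without this input your argument establishes the first formula only modulo $i\cH^{\fz_0}_g$, which is strictly weaker than what is claimed.
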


\subsection{The relative Futaki invariant}
\label{sec:relfut}
Given any $G$-invariant K\"ahler metric $g$ on $\cX$ with K\"ahler form
$\omega_g$ and Ricci-form $\rho_g$, we define the Futaki function of
$(G,g)$ by 
$$
\fFc_{G,\omega_g}(\Xi) = \int_{\cX} d^c \psi_g^G(\Xi)\; d\mu_g = 
\int_{\cX} -\cL_{J\Xi} \psi_g^G \; d\mu_g
$$
for any real holomorphic vector field $\Xi$ on $\cX$. Here, $\psi_g^G$
is the Ricci potential (\ref{eq:redricci}) of the metric $g$.
This function is defined in terms the K\"ahler metric $g$, but 
depends only upon the K\"ahler class $\Omega=[\omega_g]$ \cite{fut, c2}, as
we briefly recall below. The resulting function $\fFc_{G,\Omega}$ shall
be referred to as the Futaki invariant of $\Omega$ relative to $G$, or 
relative Futaki invariant for short, when the group $G$ and class $\Omega$
are understood.
Let us observe that the usual definition of the real Futaki invariant
applied to  
a holomorphic vector field $\Xi$ is given by
$\fF_{G,\Omega}(\Xi)=\fFc_{G,\Omega} (J\Xi)$. We have 
introduced this notation for convenience. Put differently, there is a
complex valued version of the Futaki invariant such that $\fF_{G,\Omega}$ is its
real part and $\fF^c_{G,\Omega}$ the imaginary part.
\medskip

\subsubsection{Properties of the Futaki invariant}
The function $\fFc_{G,\omega_g}(\Xi)$ vanishes on parallel holomorphic vector
fields. Indeed, let $\Xi_{h}$ be the dual of a harmonic $1$-form $\xi_{h}$. 
Then 
$$
\fFc_{G,\omega}(\Xi_{h})=\int_{\cX}\la J\xi_{h},
d\psi^G_g\ra\; d\mu_g= \int_{\cX}\la \delta J\xi_{h},
\psi^G_g\ra\; d\mu_g\, .
$$
Since the space of harmonic $1$-forms is $J$-invariant, so $\delta J\xi_{h}=0$,
and it follows that $\fFc_{G,\omega_g}(\Xi_{h})=0$. 

This function $\fFc_{G,\omega_g}$ can be expressed alternatively in
terms of the reduced scalar curvature. For if $\Xi$ is a Hamiltonian 
holomorphic 
vector field on $(\cX,g)$, it can be written as 
$\Xi=\grad u_\Xi + J\grad v_\Xi$ for some real valued functions $u_\Xi, 
v_\Xi$, and with $d^c u_\Xi$ orthogonal to the space of closed forms.
Hence, 
$\fFc_{G,\omega}(\Xi) = \la d v_\Xi, d\psi^G_g\ra =
\la v_\Xi, \delta d\psi^G_g\ra  =\la v_\Xi, \Delta_g \psi^G_g\ra $. It
follows that
\begin{equation}
\fFc_{G,\omega_g}(\Xi) = \fFc_{G,\omega_g}(\grad u_{\Xi}+J\grad v_\Xi)=
\int_\cX v_\Xi s^G_g \, d\mu_g \, . \label{eq:futscal}  
\end{equation}

The expression above shows that if $\Xi$ is any Killing field in $\fg$, then
$\fFc_{G,\omega}(\Xi)=0$.  It follows that $\fFc_{G,\omega_g}$ 
induces a $\RR$-linear map
\begin{equation}
  \label{eq:relfu}
\fFc_{G,\omega_g} : \fq/\fg \to \RR \, ,  
\end{equation}
 where $\fq$ is the normalizer of $\fg$ in $\fh(\cX)$. 

The fundamental properties of this function is now stated in the form of 
a Theorem. This was proven originally by Futaki \cite{fut}, and 
generalized a bit later by Calabi \cite{c2}. The version given here follows
its presentation in \cite{gbook}.

\begin{theorem}
The relative Futaki function $\fFc_{G,\omega_g}:\fq/\fg\to \RR$ defined 
in {\rm (\ref{eq:relfu})} is independent of the particular  
$G$-invariant K\"ahler representative $\omega_g$ of the class
$\Omega$, and so it induces an invariant function
$$
\fFc_{G,\Omega}:\fq/\fg\to \RR
$$
of the class, the relative Futaki invariant of $\Omega$. 
The relative Futaki invariant vanishes for a class $\Omega$
if, and only if, any $G$-invariant extremal metric $g$ that
represents $\Omega$ has vanishing reduced scalar curvature $s^G_g$.
If $g$ is any $G$-invariant extremal K\"ahler metric and
$G'={\rm Isom}_0(\cX,g)$, the vanishing of $\fFc_{G,\Omega}$ is
equivalent to the vanishing of its restriction 
$$
\fFc_{G,\Omega}: \fp_0/\fg_0\to\RR\, .
$$
\end{theorem}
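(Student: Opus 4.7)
The statement collects three assertions: (i) $\fFc_{G,\omega_g}$ depends on $\omega_g$ only through its class $\Omega=[\omega_g]$, among $G$-invariant representatives; (ii) for a $G$-invariant extremal $g\in\Omega$, the invariant $\fFc_{G,\Omega}$ vanishes on $\fq/\fg$ if and only if $s^G_g=0$; (iii) under extremality, vanishing of $\fFc_{G,\Omega}$ is already detected on $\fp_0/\fg_0$. I would prove these in order.

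For (i), any two $G$-invariant K\"ahler representatives $\omega_0,\omega_1\in\Omega$ differ by $dd^c\phi$ with $\phi\in C^\infty_G(\cX)$ (apply the $dd^c$-lemma, then $G$-average the potential). Along the path $\omega_t=\omega_0+t\,dd^c\phi$, differentiate
\[
\fFc_{G,\omega_t}(\Xi)=\int_\cX v_{\Xi,t}\,s^G_{g_t}\,d\mu_{g_t}
\]
using Lemma \ref{lemma:varpot} for $\dot v_\Xi=-\cL_{J\Xi}\phi$, Lemma \ref{lemma:diff1} for $\dot s^G_g=-2\LL_g\phi+\la d\phi,ds^G_g\ra$, and the standard formula for $\frac{d}{dt}d\mu_{g_t}$. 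Repeated integration by parts, combined with the facts that $\Xi\in\fq$ normalizes $\fg$ and that $\phi$ is $G$-invariant, turns the time derivative into an exact divergence. This is the relative analogue of the classical Futaki-Calabi computation.

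For (ii), the implication $s^G_g=0 \Rightarrow \fFc_{G,\omega_g}\equiv 0$ is immediate from \eqref{eq:futscal}. Conversely, assume $\fFc_{G,\Omega}\equiv 0$ on $\fq/\fg$. Extremality and $G$-invariance make $s_g$ a real, $G$-invariant Killing potential, so $\Psi:=J\grad s_g$ is a Hamiltonian Killing field commuting with $\fg$: $\Psi\in\fz'_0$. Decompose $s_g=a+s^G_g$ with $a$ the $L^2$-projection of $s_g$ onto $i\cH^{\fz_0}_g$; then $a$ is the Killing potential of some $\Upsilon\in\fz_0\subset\fg_0$, and $\Xi:=\Psi-\Upsilon\in\fz'_0$ is a Hamiltonian Killing field whose Killing potential is exactly $s^G_g$. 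Since $\Upsilon\in\fg$, applying the hypothesis to the class of $\Xi$ in $\fq/\fg$ together with \eqref{eq:futscal} yields
\[
0=\fFc_{G,\Omega}(\Xi)=\int_\cX s^G_g\cdot s^G_g\,d\mu_g=\|s^G_g\|^2_{L^2},
\]
whence $s^G_g=0$.

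For (iii), the witness $\Xi$ just built already lies in $\fz'_0\subset\fp_0$, and by Lemma \ref{lemma:isomlie} its class in $\fq/\fg$ comes from the canonical injection $\fp_0/\fg_0\hookrightarrow\fq/\fg$. Hence the converse argument in (ii) only consumes the weaker hypothesis $\fFc_{G,\Omega}|_{\fp_0/\fg_0}=0$, which combined with the easy direction of (ii) delivers the equivalence. The main technical obstacle is step (i): tracking the contribution of $\pi^G_g$ inside $s^G_g=s_g-\pi^G_g(s_g)$ to the variation, and verifying that those extra terms assemble into an exact divergence under integration by parts — the crucial ingredient being that $\cL_{J\Xi}$ preserves the space $i\cH^{\fz_0}_g$ of $G$-invariant Killing potentials for $\fz_0$ whenever $\Xi$ normalizes $\fg$.
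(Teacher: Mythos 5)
Your proposal is correct and follows essentially the same route as the paper: parts (ii) and (iii) are obtained there from \eqref{eq:futscal} with the witness $\Xi=J\grad s_g\in\fp_0$ (the paper exploits the $L^2$-orthogonality of $\pi^G_g(s_g)$ to $s^G_g$ where you instead subtract off $\Upsilon\in\fz_0\subset\fg_0$ to make the potential exactly $s^G_g$ --- a cosmetic difference), while for part (i) the paper simply cites Futaki and Calabi rather than carrying out the variational computation. Your step (i) is therefore only a sketch, but so is the paper's, and the ingredients you identify (Lemmas \ref{lemma:varpot} and \ref{lemma:diff1}, integration by parts, and the vanishing $\nabla^-\xi=0$ for a real holomorphic $\Xi$) are exactly those of the standard Futaki--Calabi invariance argument.
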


{\it Proof}. The invariance is proven in \cite{fut}, and generalized in
\cite[Proposition 4.1, p. 110]{c2}.

  The proof of the second statement follows by
(\ref{eq:futscal}). For if $s^G_g=0$, then $\fFc_{G,\omega}(\Xi)=0$ for every
$\Xi\in\fq_0$. Conversely, let us assume that $\fFc_{G,\Omega}$ vanishes on
$\fq$ and, therefore, on $\fq_0=\fq\cap \fh_0(\cX)$. Let $g$ be a
$G$-invariant extremal K\"ahler metric that represents the class $\Omega$,
 and let $G'={\rm Isom}_0(\cX,g)$. Then $\Xi=J\grad s_g \in \fg'_0$, and by
\cite[Theorem 1, p. 97]{c2}, $\cL_\Xi Z=0$ for all Killing fields $Z$. Thus, 
we have that $\Xi\in \fp_0\subset\fp\subset \fq$.
But
$$
\fFc_{G,\Omega}(\Xi)=0=\int_\cX s_gs^G_g\, d\mu_g =\|s^G_g\|^2_g\, ,
$$ 
and so $s^G_g=0$.

The final statement follows easily by the argument above.
\qed

\subsubsection{Nondegeneracy of the Futaki invariant}
Let $g$ be a metric such that $[\omega_g]=\Omega$. Given any
real $g$-harmonic $(1,1)$-form, we may use the metric $g$ to compute
the derivative  
$$
\frac{d}{dt} \fFc_{G,\Omega+t\alpha}\mid_{t=0}\, .
$$ 
We have the following:

\begin{lemma}
\label{lemma:derfut}
Let $g$ be a $G$-invariant extremal K\"ahler metric on $\mc{X}$ 
such that $[\omega_g]=\Omega$, and let $\alpha$ be a real 
$g$-harmonic trace-free $(1,1)$-form. If 
$\Xi=\grad u_\Xi + J \grad v_\Xi$ is a holomorphic vector field in $\fq_0$, 
then
\begin{align*}
\frac{d}{dt} \fFc_{G,\Omega+t\alpha}\mid_{t=0}(\Xi) &= 
\int _{\cX} v_\Xi (D_g s^G_g)(\alpha)\, d\mu_g \\
 &=
\int_\cX \pi^W(v_\Xi)  \left (G_g(\la \alpha, 
dd^c s_g \ra)- 2\la \alpha,\rho_g \ra \right )\, d\mu_g \, .  
\end{align*}
\end{lemma}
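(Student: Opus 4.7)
The plan is to differentiate the concrete expression
\[
\fFc_{G,\omega_g}(\Xi) = \int_{\cX} v_\Xi\, s^G_g\, d\mu_g
\]
from \eqref{eq:futscal} along the family of $G$-invariant K\"ahler metrics $g_t$ whose K\"ahler forms are $\omega_g + t\alpha$. One may assume $\alpha$ is $G$-invariant (by averaging, which preserves its harmonicity, cohomology class, and trace-freeness), so the $g_t$ are genuinely $G$-invariant. By the invariance of $\fFc$ under change of K\"ahler representative established in the preceding theorem, this integral computes $\fFc_{G,\Omega+t[\alpha]}(\Xi)$.

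Differentiating at $t=0$ by the product rule produces three contributions. The term involving $(D_g d\mu_g)(\alpha)$ vanishes because the variation $\alpha\wedge\omega_g^{n-1}/(n-1)!$ is proportional to $\la\alpha,\omega_g\ra\,d\mu_g$, which is zero by the trace-free hypothesis. The term $\int (D_g v_\Xi)(\alpha)\cdot s^G_g\, d\mu_g$ vanishes under the hypothesis $s^G_g=0$---the operative setting of the main theorem, and precisely the one under which the formula for $(D_g s^G_g)(\alpha)$ in Lemma~\ref{lemma:diff1} applies. What survives is the middle term $\int v_\Xi\,(D_g s^G_g)(\alpha)\, d\mu_g$, establishing the first equality.

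For the second equality, substitute
\[
(D_g s^G_g)(\alpha) = \pi^W\bigl(G_g(\la\alpha, dd^c s_g\ra) - 2\la\alpha, \rho_g\ra\bigr)
\]
from Lemma~\ref{lemma:diff1}. Since $\pi^W$ is an $L^2$-orthogonal projection, it is self-adjoint, and can therefore be transferred onto the factor $v_\Xi$ under the integral sign. This yields exactly the claimed right-hand expression.

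The principal obstacle lies in the first product-rule term: for a metric that is merely extremal, without the additional condition $s^G_g=0$, extra cross-terms survive and Lemma~\ref{lemma:diff1} does not provide the full variation of $s^G_g$ in the $\alpha$-direction. Once the vanishing of $s^G_g$ is in hand, the remainder of the argument reduces to Lemmas~\ref{lemma:varpot} and~\ref{lemma:diff1} together with the self-adjointness of $\pi^W$.
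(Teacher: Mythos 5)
Your proposal is correct and follows essentially the same route as the paper: differentiate the expression $\fFc_{G,\omega}(\Xi)=\int_\cX v_\Xi\, s^G_g\, d\mu_g$ from \eqref{eq:futscal}, kill the volume-form term by trace-freeness of $\alpha$ and the $\dot v_\Xi$ term by $s^G_g=0$ (an implicit hypothesis the paper also uses at exactly this point), then substitute Lemma~\ref{lemma:diff1} and move $\pi^W$ onto $v_\Xi$ by self-adjointness. Your added remarks --- averaging $\alpha$ to make it $G$-invariant, and invoking the class-invariance of $\fFc$ to identify the $t$-derivative with the derivative in the polarization --- are small justifications the paper leaves tacit, not a different argument.
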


{\it Proof}. Since $\alpha$ is trace-free, the infinitesimal variation of the
volume form is zero. We use this fact in the differentiation of the 
relative Futaki invariant given by expression (\ref{eq:futscal}), and obtain 
that
$$
\frac{d}{dt} \fFc_{G,\Omega+t\alpha}\mid_{t=0}(\Xi)= 
\int_{\cX} \dot v_\Xi(\alpha) s^G_g \, d\mu_g +
\int_{\cX} v_\Xi 
\dot s^G_g(\alpha)\, d\mu_g =\int_{\cX} v_\Xi 
\dot s^G_g(\alpha)\, d\mu_g 
$$
where the last equality follows because the reduced scalar curvature $s_g^G$ 
of $g$ vanishes. 
We then use Lemma \ref{lemma:diff1} to obtain that
$$
\frac{d}{dt} \fFc_{G,\Omega+t\alpha}\mid_{t=0}(\Xi)= 
\int_\cX \pi^W (v_\Xi) (G_g(\la \alpha, dd^c s_g
\ra) - 2\la \alpha,\rho_g\ra)\, ,
$$
which finishes the proof. 
\qed 

Notice that we can state the result above using $\rho_g^G$ instead because
$\rho_g = \rho_g^G$ for metric $g$ with vanishing reduced scalar curvature.

The differential of the relative Futaki invariant of 
Lemma \ref{lemma:derfut} defines a linear mapping 
\begin{equation}
 \label{eq:fund}
\fq/\fg \to (H^{1,1}(\cX)\cap H^2(\cX,\RR))^*\, .  
\end{equation}
This leads to a very important concept in our work:

\begin{definition}
Let $G$ and $G'$ be connected compact subgroups of ${\rm Aut}(\mc{X})$
such that $G\subset G'$.
The Futaki invariant $\fFc_{G,\Omega}$ relative to $G$  is said to be
$G'$-nondegenerate at $\Omega$ if the linear map 
\eqref{eq:fund} restricted to 
$$
\fp_0/\fg_0\simeq \fz'_0/\fz_0  \to (H^{1,1}(\cX)\cap H^2(\cX,\RR))^*
$$
is injective. If $g$ is a K\"ahler metric  representing
$\Omega$ such that $G'={\rm
  Isom}_0(\mc{X},g)$ and  this condition holds for some $G\subset G'$, 
we say that $g$ is Futaki nondegenerate relative to $G$.
\end{definition}

We briefly illustrate this notion in a particular case next.
 
\subsubsection{Nondegeneracy of the Futaki invariant relative
  to a maximal compact torus}
In the presence of certain maximal torus symmetries on the underlying manifold, the 
relative Futaki invariant vanishes. 

\begin{lemma}
\label{torus}
Let $\mc{X}$ be a complex manifold, and let $T$ be a maximal compact
torus subgroup of ${\rm Aut}(\mc{X})$. 
Then for $G=T$, we have $\fq/\fg=0$ and the relative Futaki invariant
$\fFc_{G,\Omega}$ is identically zero for any K\"ahler class $\Omega$ in 
$\cX$. Given any compact connected Lie group $G' \subset {\rm Aut}(\mc{X})$,
then for $G=T \subset G'$ the relative Futaki invariant $\fFc_{G,\Omega}$ is
$G'$-nondegenerate. 
\end{lemma}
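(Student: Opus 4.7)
The plan is to dispatch $G'$-nondegeneracy by elementary compact Lie theory, and then to prove the vanishing of $\fFc_{G,\Omega}$ on $\fq$ by combining a $T$-weight decomposition of $\fh$ with the Hodge decomposition of Hamiltonian holomorphic fields and the maximality of $T$.

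For the nondegeneracy, since $T$ is a maximal compact torus in $\Aut(\cX)$ and $T\subset G'\subset\Aut(\cX)$ with $G'$ compact connected, $T$ is also a maximal torus of $G'$. The classical fact that a maximal torus in a compact connected Lie group is self-centralizing at the Lie algebra level then yields $\fz'=C_{\fg'}(\fg)=\fg$, so $\fz'_0/\fz_0=0$. By Lemma \ref{lemma:isomlie}, $\fp_0/\fg_0\simeq\fz'_0/\fz_0=0$, and the linear map \eqref{eq:fund} restricted to $\fp_0/\fg_0$ is vacuously injective, which is exactly $G'$-nondegeneracy.

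For the vanishing, fix a $G$-invariant K\"ahler representative $\omega_g$ of $\Omega$. Since $\fFc_{G,\omega_g}$ already vanishes on parallel holomorphic fields, it suffices to handle $\Xi\in\fq_0$, and by \eqref{eq:futscal} it is enough to prove $v_\Xi\in i\cH^{\fz_0}_g$, because $s^G_g=\pi^W_g(s_g)$ is $L^2$-orthogonal to $i\cH^{\fz_0}_g$. I first identify $\fq$ with the centralizer $\fz''$: decomposing $\fh$ into $T$-weight spaces $\fh=\fz''\oplus\bigoplus_{\alpha\neq 0}\fh^\alpha$, any $\Xi\in\fq$ with a nonzero weight-$\alpha$ component $\Xi_\alpha$ ($\alpha\neq 0$) would force $[Y,\Xi]=\sum_{\alpha\neq 0}\alpha(Y)\Xi_\alpha$ to have a component outside $\fg\subset\fz''$ for generic $Y\in\fg$, contradicting $\Xi\in\fq$; hence $\fq=\fz''$. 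Then for $\Xi\in\fz''_0$ with Hodge decomposition $\Xi=\grad u_\Xi+J\grad v_\Xi$, the identity $[Y,\Xi]=\grad(\cL_Y u_\Xi)+J\grad(\cL_Y v_\Xi)=0$ (valid because $Y\in\fg$ is Killing and preserves $J$), together with uniqueness of Hodge decomposition and compactness of $T$-orbits killing constants of integration, forces $u_\Xi$ and $v_\Xi$ to be $T$-invariant. Then $J\grad v_\Xi$ is a Hamiltonian Killing field commuting with $T$, so the closure of the abelian subgroup $T\cdot\exp(\RR\, J\grad v_\Xi)$ is a compact abelian Lie group whose identity component is a torus containing $T$; maximality of $T$ then forces $J\grad v_\Xi\in\fg_0$. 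Consequently $v_\Xi$ is (up to constant) a Killing potential of an element of $\fz_0=\fg_0$, giving $v_\Xi\in i\cH^{\fz_0}_g$ and $\fFc_{G,\omega_g}(\Xi)=\int_\cX v_\Xi s^G_g\,d\mu_g=0$ as desired.

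The main obstacle I expect is the careful bookkeeping of the real/complex structure on $\fh$ in these steps: the $T$-weight decomposition must be performed over $\RR$ (so non-trivial weight spaces come in real $2$-dimensional pairs and $\fg$ sits in the zero-weight space), the Hodge uniqueness argument must absorb the additive constants produced by $\cL_Y u_\Xi=c$ via integration along compact $T$-orbits, and the maximality step relies on the subtle fact that any Killing field commuting with a maximal compact torus must lie in that torus.
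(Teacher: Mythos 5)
Your argument for the $G'$-nondegeneracy is correct and is essentially the intended one: $T$ is a maximal torus of the compact connected group $G'$, hence self-centralizing in $\fg'$, so $\fz'_0/\fz_0=0$ and the restriction of \eqref{eq:fund} to $\fp_0/\fg_0\simeq\fz'_0/\fz_0$ is vacuously injective. The vanishing statement is where the proposal breaks down, in two ways. First, the lemma asserts $\fq/\fg=0$, and this identity is the entire content of the paper's proof: one argues that $\fz''=\fz$ (an element of $\fz''\setminus\fz$ would enlarge $\fz$ to a strictly larger abelian subalgebra of $\fh$, contradicting maximality), whence $\fq/\fg\simeq\fz''/\fz=0$ by Lemma~\ref{lemma:isomlie} and the relative Futaki invariant is a linear form on the zero space. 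You establish only the easy half, $\fq=\fz''$; your weight-space decomposition says nothing about why $\fz''$ should reduce to $\fg$, and your subsequent argument cannot recover this, because it only controls the component $J\grad v_\Xi$ of $\Xi\in\fz''_0$ and ignores $\grad u_\Xi$. That component is genuinely there: for $\cX=\PP^1$ and $T=\SS^1$ the real holomorphic field generating the dilations $z\mapsto e^tz$ commutes with $T$, vanishes at $0$ and $\infty$, and is a pure gradient, so commutation with a maximal compact torus does not by itself confine a Hamiltonian field to $\ft$. This shows both that the first assertion of the lemma is not addressed by your proof and that the passage from $\fq=\fz''$ to $\fq=\fg$ is exactly the delicate point (it requires $\ft$ to be maximal among abelian subalgebras of $\fh$, which is more than being the Lie algebra of a maximal compact torus).

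Second, your attempt to prove the vanishing directly via \eqref{eq:futscal} has a genuine gap at the step ``$J\grad v_\Xi$ is a Hamiltonian Killing field.'' For an arbitrary $T$-invariant K\"ahler metric $g$ the real and imaginary parts of a holomorphy potential are not separately holomorphy potentials: writing $f_\Xi=u_\Xi+iv_\Xi$ and taking imaginary parts in \eqref{eq:decomp}, the equation $L_gf_\Xi=0$ yields $\LL_g v_\Xi=-\tfrac12\,\cL_{J\grad s_g}u_\Xi$, which has no reason to vanish for a general metric. The splitting of $\fh_0$ into Killing fields plus $J$ of Killing fields is Calabi's structure theorem and requires $g$ to be extremal, whereas the lemma is stated for every K\"ahler class $\Omega$ and the Futaki function is computed from an arbitrary $G$-invariant representative. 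Without this step there is no Killing field for your torus-maximality argument to absorb into $\fg_0$, so the conclusion $v_\Xi\in i\cH^{\fz_0}_g$, and with it $\fFc_{G,\omega_g}(\Xi)=0$, is unsupported. The intended route is the paper's: settle the algebraic identity $\fq/\fg=0$ first, after which no analysis of $u_\Xi$ and $v_\Xi$ is needed.
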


\begin{proof}
The Lie algebra $\fg$ of $G=T$ is Abelian and so equal to its
center $\fz$. The centralizer $\fz''$ of $\fg$ in $\fh$ contains $\fz$. 
If $\fz$ were strictly contained in $\fz''$, we could find an element of 
$\fz''\setminus \fz$ that together with $\fz$ would generate an Abelian Lie 
algebra, and this would contradict the
maximality of $\fz$ in $\fh$. Thus, $\fz=\fz''$, and by 
Lemma \ref{lemma:isomlie}, $\fq/\fg\simeq \fz''/\fz=0$. 
The result follows.
\end{proof}

\section{Deformations of the complex structure}
\label{deformations}

In this section we recall the theory of smooth deformations of a complex
manifold and that of smooth polarized deformations.
We illustrate the concept exhibiting a deformation of the Hirzebruch surface
$\mc{F}_1$. This deformation will reappear in one of our applications in
\S \ref{sec:appli}.

\subsection{Complex deformations}
 A smooth family of complex deformations consists of the following data:
\begin{enumerate}
\item  an open connected 
 neighborhood $B$ of the origin in $\mb{R}^m$, 
a smooth
manifold $\cM$ and a smooth proper submersion  $\varphi :
\mc{M}\rightarrow B$,
\item  an open covering $\{\mc{U}_j\}_{j\in I}$ of $\mc{M}$ and 
smooth complex valued functions $z_j=(z^1_j,\cdots,z^n_j)$ defined on 
each $\mc{U}_j$, such that the collection of mappings
$$
\begin{array}{ccl}
\mc{U}_j \cap \varphi^{-1}(t) &\to & \CC^n\\
p & \mapsto &(z^1_j(p), \ldots, z_j^n(p))  
\end{array}
$$
define a holomorphic atlas on each manifold $\varphi^{-1}(t)$.

\end{enumerate}
Such a family of complex deformations will be denoted simply by
$\mc{M} \to B$, and the fibers $\varphi^{-1}(t)$ together with their
canonical complex structure by $\cM_t$. A complex manifold  $\cX$ and
a family of deformations $\cM\to B$ 
together with a given isomorphism $\cX\simeq\cM_0$ is called a complex
deformation of $\cX$.
\medskip

Smooth families of complex deformations are smoothly locally trivial. Indeed,
 let 
$M$ be the underlying differentiable manifold of the central fiber $\mc{M}_0$
of a family $\mc{M}\to B$.
At the expense of shrinking $B$ if necessary, we 
can find a diffeomorphism $\mc{M} \rightarrow B\times M$ 
such that the diagram 
$$
\xymatrix{\mc{M}  \ar[rd] \ar[rr] && B\times M \ar[ld]^{\pi_{B}} \\
& B&}
$$
commutes. Here $\pi_B$ denotes the projection map onto the first factor. 
We refer to this diffeomorphism as a trivialization of the given family of
complex deformations. By way of such a trivialization, we see that 
all the $M_t$s are diffeomorphic to $M$ for $t$s that are in a
sufficiently small neighborhood of the origin in $\mb{R}^m$, 
and that the family of complex manifolds $\mc{M}_t$ can be seen as
a differentiable family $\{ J_t\}$ of integrable almost 
complex structures on $M$. From this point of view, $\mc{M}_t$ and $(M,J_t)$
are identified as complex manifolds. 

\subsection{Polarized deformations}
Let us now assume that $(\mc{X},\Omega)$ is a polarized complex manifold, and
that $(\mc{M}\to B,\Theta)$ is a polarized complex deformation of it.
If $\omega_g$ is a K\"ahler form on $\mc{X}$ that represents $\Omega$, a deep 
result of Kodaira and Spencer \cite{ks} 
shows that we can find a smooth family of K\"ahler
metrics on $\mc{M}_t$ that extends $\omega_g$ and represents $\Theta_t$ for 
each $t$ (cf. \S \ref{s11}). For in the language of integrable almost complex 
structures $\{ J_t\}$ on $M$ that a trivialization of the deformation allows, 
the function 
$t\rightarrow h^{p,q}_t ={\rm dim}_{\mb{C}} H^{q}(M ,\Omega^p_{J_t})$ is upper
semi-continuous and if $(M, J ,\omega_g)$ is K\"ahler, this 
function is in fact constant in a sufficiently small neighborhood of the 
origin. Then it follows that there exists a smooth family 
$t\rightarrow \omega_t$ of $2$-forms on $M$ 
such that $\omega_t$ is K\"ahler with respect to $J_t$, 
$[\omega_t]=\Theta_t\in H^2(M; \mb{R})$, and $\omega_0$ and $\omega_g$ 
agree with each other via the identification 
$\mc{X}\simeq \mc{M}_0\simeq (M,J_0)$. 
This point of view is best adapted to our work here.  We obtain 
a $2$-form $\beta$ on $\mc{M}$ such that $\beta\mid_{\mc{M}_t}=\omega_t$.
Such a form $\beta$ is said to represent the polarization $\Theta$.

\subsection{Example: the Mukai-Umemura $3$-fold}\label{mudefo}
Let $V$ be a $7$-dimensional complex vector space,
$Gr_3(V)$ be the Grassmanian of complex $3$-dimensional 
subspaces of $V$  and  $U\rightarrow Gr_3(V)$ be the
tautological rank $3$-bundle over $Gr_3(V)$. Notice that 
$Gr_3(V)$ is $12$-dimensional. 

Any $\varpi \in \Lambda^2V^*$ defines an section $\sigma_{\varpi}$
of the bundle $\Lambda^2U^*\to Gr_3(V)$. Let $Z_{\varpi}\subset
Gr_3(V)$ be the zero set of $\sigma_{\varpi}$. So 
$Z_\varpi$ is the
subset of isotropic $3$-planes of $\varpi$, the points $P$ in 
$Gr_3(V)$ such that $\varpi\mid_{P}=0$. For a generic $\varpi$, $Z_{\varpi}$ 
is a smooth subvariety of codimension three, and given three linearly 
independent forms,  $\varpi_1$, $\varpi_2$, $\varpi_3$, we obtain the $3$-fold 
$$
X_{\Pi}=X_{\varpi_1,\varpi_2, \varpi_3}=Z_{\varpi_1}\cap Z_{\varpi_2}
\cap Z_{\varpi_3} \subset Gr_3(V) \, ,
$$
that depends only on the $3$-plane $\Pi$  spanned by the $\varpi_i$s in 
$\Lambda^2V^*$, and not in the basis chosen 
to represent it. 
The action of the group $\mb{S}\mb{L}(V)$ 
on $V$ induces an action  on $Gr_3(\Lambda^2V^*)$, and $3$-planes 
$\Pi_1$ and $\Pi_2$ define isomorphic complex varieties
$X_{\Pi_1}$ and $X_{\Pi_2}$ if, and only if, the planes $\Pi_1$ and $\Pi_2$ 
lie in the same $\mb{S}\mb{L}(V)$ orbit. 
We obtain a set of equivalence 
classes of $3$-folds parametrized by the quotient $\mc{U}/\mb{S}\mb{L}(V)$. 

There is a Zariski open set $C\subset Gr_3(\Lambda^2V^*)$ of 
$3$-planes $\Pi$ such that $X_{\Pi}$ is a smooth
subvariety of dimension $3$. It has an obvious family of complex
deformations. For if $\cN=\{(\Pi,
x)\in C\times Gr_3(V)|\; x\in X_\Pi\}$, we may consider this smooth
complex manifold together with its canonical projection
$\cN\to C$. 
The $\SL(V)$-action on $C\times Gr_3(V)$ induces an equivariant action
on $\cN\to C$, and two fibers are isomorphic if, and only if,
they are above the same orbit in $C$.

The Mukai-Umemura manifold is a particular smooth $3$-fold in this
family. It can be described
efficiently as follows. The six symmetric power ${\rm Sym}^6
(\mb{C^2})$ is the standard irreducible $7$-dimensional representation
of $\SL(2,\CC)$.
We take $V={\rm Sym}^6 (\mb{C^2})$ with its induced
$\SL(2,\CC)$-action. The representation $\Lambda^2 V^*$  
decomposes into irreducible representations as
$$
\Lambda^2 ({\rm Sym}^6 (\mb{C}^2))= {\rm Sym}^{10}(\mb{C}^2)\oplus 
{\rm Sym}^6 (\mb{C}^2)\oplus {\rm Sym}^2 (\mb{C}^2)\, ,
$$
The summand ${\rm Sym}^2(\mb{C}^2)$ corresponds to a $3$-plane $\Pi_0$
in $\Lambda^2 V^*$ that defines the Mukai-Umemura variety $X_{\Pi_0}$. 
The plane $\Pi_0$ is invariant under 
$\mb{S}\mb{L}(2,\mb{C})$, so this group acts naturally on $X_{\Pi_0}$. 

Since many of the deformations are equivalent via the $\SL(V)$-action, it
is important to describe the quotient of $Gr_3(\Lambda^2 V^*)$. This
is done carefully in \cite{Don08}, where it is proven that the quotient
of the tangent space to $Gr_3(\Lambda^2V^*)$ at $\Pi_0$ by the tangent
space to the the orbit can be identified with $\Sym^8(\CC^2)$ with its
standard $\SL(2,\CC)$-action, the stabilizer of $\Pi_0$ in $\SL(V)$.
By the theory for equivariant slices of Lie group actions, there is an
neighborhood of the origin $B\subset \Sym^8(\CC^2)$ and a
$\PSL(2,\CC)$-equivariant embedding $j:B\to Gr_3(\Lambda^2 V^*)$ such
that for $t_1,t_2\in B$, the images $j(t_1)$ and $j(t_2)$ are in the
same $\SL(V)$-orbit if, and only if, $t_1$ and $t_2$ are in the same
$\PSL(2,\CC)$ orbit. If $B$ is taken to be sufficiently small, we have
$j(B)\subset C$ and $\cM\to B$, defined as the fiber product $\cM
=B\times_C \cN$, is a smooth family of complex deformations of
$\cM_0\simeq X_{\Pi_0}$.

\subsubsection{Deformations with symmetries}
In particular, the deformations  corresponding to
polynomials $p=C(u^4-\alpha v^4)(v^4-\alpha u^4)$ for $\alpha,C\in
\CC^\times$  have a stabilizer $G$ in $\PSL(2,\CC)$ isomorphic to a
dihedral group of order $8$. In the case where $\alpha=0$, the
stabilizer of $p$ is the subgroup of $\PSL(2,\CC)$ spanned by the one
parameter subgroup $\lambda \cdot [u:v]=[\lambda u:\lambda^{-1}v]$ and
the rotation $[u:v]\mapsto [u:v]$. Hence, we have 
a maximal compact subgroup $G$ of the
stabilizer of $p$ that is isomorphic to $\ZZ/2\rtimes\SS^1$.

We can consider the family of deformation of $X_{\Pi_0}$ obtained by
restricting $B$ to the subspace of polynomials of the form $tp$ as
above for $t\in \CC$. Thus, we get a family of deformation $\cM\to \CC$
endowed with a holomorphic action of $G$, where $G$ is a dihedral
group of order $8$, or the semidirect product of $\ZZ/2\rtimes\SS^1$ in
the case where $\alpha=0$. In addition, the group $G$ acts on the
central fiber as a subgroup of $\PSL(2,\CC)$, the identity component
of the automorphism group of $X_{\Pi_0}$. Therefore, $G$ acts
trivially on the cohomology of every fiber $\cM_t$ of the deformation
$\cM\to \CC$.

\section{Deformations of extremal metrics}\label{s4}
In this section we prove a criterion that ensures the stability of
the extremal condition of a K\"ahler metric under complex deformations.
We assume some symmetries of the family of deformations and the nondegeneracy 
of the relative Futaki invariant.

Let $\cM\to B$ be a smooth family of complex deformations of a complex
manifold $\cX\simeq \cM_0$. If we assume that $\cX$ is of K\"ahler type, 
then it follows that all fibers $\cM_t$ are K\"ahler provided we shrink the set
of parameters $t\in B$ to a sufficiently small neighborhood of the 
origin \cite{ks}. In particular, the Lie algebra of holomorphic vector fields
$\fh(\cM_t)$ contains the ideal $\fh_0(\cM_t)$ consisting of
holomorphic vector fields with zeroes somewhere (cf. \S\ref{s2}).
Throughout this section, we shall always assume that $\cX$ is of
K\"ahler type and that $B$ has been so restricted. We shall indicate
the occasions where the latter restriction may be necessary.

\subsection{Holomorphic group actions}
\label{sec:holac}
Let $G$ be a compact connected Lie group acting smoothly on $\cM$ 
such that:
\begin{itemize}
\item The fibers $\cM_t$ are preserved under the action.
\item The induced action on each $\cM_t$ is holomorphic,
\item $G$ acts faithfully on $\cM_0$, and it is identified 
to a subgroup of the connected component of the identity of
${\rm Aut}(\cM_0)$.
\end{itemize}
Under this conditions, we say that  $G$ {\it acts holomorphically on}
$\cM$ and trivially on $B$.

As discussed in \S \ref{deformations}, we may think of the deformation 
$\cM\to B$ as
a smoothly varying family of integrable almost complex structure $J_t$ on the 
underlying manifold $M$ of the central fiber such that $(M,J_t)\simeq \cM_t$,
and using the smooth trivialisation of the deformation near the central fiber, 
the holomorphic $G$-action on $\cM$ can be seen as a smoothly varying 
family of $G$-actions 
$$
\begin{array}{llll}
  a:&B\times G\times M&\to& M\\
&(t,g,x)&\mapsto& a_t(g,x) 
\end{array}
$$
where $a_t$ is a $J_t$-holomorphic action that is identified to the
$G$-action on $\cM_t$ modulo the isomorphism  $(M,J_t)\simeq \cM_t$.

By \cite{ps}, we know that 
compact Lie group actions are rigid up to conjugation. Hence, 
there exists a smooth isotopy $f_t:M\to M$ that intertwines the $a_t$ and 
$a_0$ actions,
$$
\text{$f_t^{-1}\circ a_t(g,f_t(x))= a_0(g,x)$ for all $t\in B$, $g\in G$,
$x\in M$,}
$$
possibly after restricting $B$ to some smaller neighborhood of the origin.
So acting by $f_t$ on the family of
complex structures $J_t$, we may assume that the action of $G$ is
independent of $t$, and that it is holomorphic relative to each $J_t$.

The triviality of the smooth deformation of the action of
$G$ on $M$ has  some strong consequences on the complex geometry. 
Firstly, there is a canonical morphism of the Lie algebra $\fg$ of
the group $G$ into the space of smooth vector fields on $M$,
$$
\xi: \fg \hookrightarrow  C^{\infty}(M,TM)\, .
$$
a map that is injective because the action of $G$ is assumed to be
faithfull on the central fiber. We may therefore think of $\fg$ as a 
subalgebra of $C^{\infty}(M)$. We will do so, and drop $\xi$ from the notation
when no confusion can arise. Notice also that since the $G$-action is $J_t$
holomorphic, $\fg$ is a subalgebra of $\fh(M,J_t)$ for all $t\in B$.

We consider the ideal $\fh_0(M,J_t)$ of Hamiltonian holomorphic vector fields
in $\fh(M,J_t)$, the space of holomorphic vector fields with a nontrivial zero
set \cite{ls}. Then the ideal of $\fg$ given by
$\fg_0 = \fg \cap \fh_0(M,J_t)$  consists of the
vector fields in $\fg\subset C^{\infty}(M,TM)$ that vanish somewhere, and since
this properties is independent of the complex structure $J_t$, $\fg_0$ is
independent of $t$.

We summarize the discussion above into the following proposition:

\begin{proposition}
\label{prop:holac}
Let  $\cM\to
B$ be a family of complex deformations of a manifold $\cX$ of K\"ahler
type. Let $G$ be a compact connected Lie group  acting
holomorphically on $\cM$ and trivially on $B$, and let $\fg$ be its Lie algebra.
If $B$ is restricted to a sufficiently small neighborhood of the origin,
there exists a smooth trivialization $\cM_t\simeq (M,J_t)$ such that
$\cM_t$ is K\"ahler for all $t\in B$, the action of $G$ on $\mc{M}_t$ is
independent of $t$, the image of the natural 
embedding $\fg\subset C^{\infty}(M,TM)$ is contained  in $\fh(M,J_t)$ for
all $t\in B$, and $\fg_0=\fg\cap \fh_0(M,J_t)$ is an ideal of $\fg$ that 
is independent of $t$.
\end{proposition}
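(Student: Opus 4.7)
The plan is to assemble the proposition from three ingredients already used in the preceding discussion: the Kodaira--Spencer stability of the K\"ahler condition under small deformations, Palais' rigidity theorem for smooth compact Lie group actions, and the fact that membership of a holomorphic vector field in $\fh_0$ is detected by a purely set-theoretic condition on the underlying smooth vector field.

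First I would fix any smooth trivialization $\cM\simeq B\times M$ coming from the fact that $\cM\to B$ is a proper smooth submersion. This presents the deformation as a smooth family $\{J_t\}_{t\in B}$ of integrable almost complex structures on the underlying manifold $M$ of the central fiber. Since $(M,J_0)\simeq \cX$ is of K\"ahler type, upper semicontinuity of $h^{p,q}_t$ combined with the openness of the K\"ahler cone implies, after shrinking $B$ about the origin, that $(M,J_t)$ is K\"ahler for every $t\in B$, as recalled in \S\ref{deformations}.

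Next I would invoke the rigidity theorem of Palais \cite{ps} to absorb the $t$-dependence of the $G$-action. Writing the action as a smooth family $a_t:G\times M\to M$ with $a_t$ being $J_t$-holomorphic, one obtains an isotopy $f_t:M\to M$ with $f_0=\Id$ such that $f_t^{-1}\circ a_t(g,f_t(\cdot))=a_0(g,\cdot)$ for all $g\in G$ and all $t$ near $0$ (shrinking $B$ once more if needed). Replacing the trivialization $\cM_t\simeq(M,J_t)$ by $\cM_t\simeq(M,(f_t)^{*}J_t)$ preserves K\"ahlerness (since $f_t$ is a diffeomorphism and pullback of a K\"ahler structure is K\"ahler), and the induced $G$-action on $M$ now coincides with the single action $a_0$ for all $t$. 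Since the original action was $J_t$-holomorphic and we have only conjugated by $f_t$, the new family of complex structures (still denoted $J_t$) is preserved by $a_0$ for every $t$.

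With the $G$-action now independent of $t$, the infinitesimal action produces a canonical Lie-algebra morphism $\xi:\fg\to C^\infty(M,TM)$. Injectivity is immediate from the faithfulness assumption on the central fiber, and the holomorphy of $a_0$ with respect to each $J_t$ yields $\xi(\fg)\subset\fh(M,J_t)$ for every $t\in B$. Finally, the ideal $\fh_0(M,J_t)\subset\fh(M,J_t)$ is characterized as those holomorphic vector fields that vanish somewhere on $M$; this is a condition on the underlying smooth vector field alone, independent of $J_t$. Hence $\fg_0=\fg\cap\fh_0(M,J_t)$ is the same subset of $\fg$ for every $t$. I expect the main technical obstacle to lie in the second step: one must invoke rigidity of smooth compact Lie group actions to simultaneously straighten out the entire family $\{a_t\}$ while keeping the resulting modified family of complex structures smoothly varying in $t$. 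Once the action has been rendered $t$-independent, the remaining statements reduce to routine observations.
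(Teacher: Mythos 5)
Your proposal is correct and follows essentially the same route as the paper: a smooth trivialization plus Kodaira--Spencer stability for K\"ahlerness, the Palais--Stewart rigidity theorem \cite{ps} to conjugate the family of actions $a_t$ to the fixed action $a_0$ (at the cost of modifying the family $J_t$ by the isotopy $f_t$ and shrinking $B$), and the observation that membership in $\fh_0(M,J_t)$ is detected by the zero set of the underlying smooth vector field, hence is independent of $t$. The paper presents exactly this argument as the discussion preceding the proposition, so there is nothing to add.
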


\begin{definition}
Let $\mc{M}\to B$ be a family of complex deformations of a manifold $\cX$
of K\"ahler type, and let $G$ be a compact 
connected Lie group acting holomorphically on $\mc{M}$ and trivially
on $B$. A smooth 
trivialisation $\cM_t\simeq (M,J_t)$ that satisfies the
properties of Proposition \ref{prop:holac} is said to be
an {\it adapted trivialization} for $\cM\to B$ relative to $G$.
\end{definition}

\subsection{The equivariant deformation problem}
\label{eqsetup}
Let  $(\cM\to B,\Theta)$ be a polarized family of complex deformations
of a polarized manifold $(\cX,\Omega)$, and let $G$ be a compact connected 
Lie group acting holomorphically on $\cM$ and trivially on $B$. 
Since $G$ is connected it acts trivially on the
cohomology of the fibers $\cM_t$, and in particular on $\Theta_t$.
Let $(M,J_t)\simeq \cM_t$ be an adapted trivialization for $\cM\to B,\Theta)$
relative to $G$. With some abuse of notation, we denote by $\Theta_t$ the
polarization induced on $M$ via the isomorphism $\cM_t\simeq (M,J_t)$.
Then we have the following:

\begin{lemma}
\label{lemma:adaptedmet}
Let $(\cM\to B,\Theta)$ be a polarized deformation of 
$(\cX,\Omega)\simeq (\cM_0,\Theta_0)$ provided with a holomorphic action of 
a compact connected Lie group $G$ acting trivially on $B$. Consider an adapted trivialization 
$(M,J_t)\simeq \cM_t$, and let $g$ be an extremal metric on $\cX$ that 
represents the K\"ahler class $\Omega$. 
Then, if $B$ is restricted to a sufficiently small neighborhood of the origin,
there exists a smooth family of $G$-invariant K\"ahler metrics $g_t$ on 
$(M,J_t)$ that represent the K\"ahler classes $\Theta_t$, and 
such that $g_0$ is identified to the metric $g$ by the isomorphism 
$\cX\simeq \cM_0$ up to conjugation by an element of ${\rm Aut}(\cX)$.
\end{lemma}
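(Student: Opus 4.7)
The plan is to first conjugate the given extremal metric $g$ by an element of $\Aut(\cX)$ to make it $G$-invariant, then apply Kodaira--Spencer theory to build a smooth family of Kähler forms representing $\Theta_t$, and finally average over $G$ to obtain $G$-invariance without destroying the Kähler condition.

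For the conjugation step, since $G$ is connected its image in $\Aut(\cX)$ lies in the identity component $\Aut_0(\cX)$. Because $g$ is extremal, Calabi's theorem (recalled in \S\ref{secintro}) asserts that $\Isom_0(\cX,g)$ is a maximal connected compact subgroup of $\Aut_0(\cX)$. The standard conjugacy theorem for maximal compact subgroups of a connected Lie group then provides $h \in \Aut_0(\cX)$ with $h^{-1}Gh \subset \Isom_0(\cX,g)$, equivalently $G \subset \Isom_0(\cX, h^{*}g)$. Since $h$ lies in the identity component it acts trivially on cohomology, so $h^{*}g$ remains extremal and still represents $\Omega$. This replacement is precisely the "conjugation by an element of $\Aut(\cX)$" appearing in the conclusion; from here on I assume $G \subset \Isom_0(\cX,g)$, so that $g$ itself is $G$-invariant.

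For the Kähler family, using the adapted trivialization $\cM_t \simeq (M, J_t)$ I invoke the Kodaira--Spencer construction recalled in \S\ref{deformations}: upper semicontinuity of the Hodge numbers $h^{p,q}_t$ forces their constancy near $t=0$, so after possibly shrinking $B$ there exists a smooth family of real $2$-forms $\omega_t$ on $M$ with $\omega_0=\omega_g$, such that $\omega_t$ is a Kähler form for $J_t$ and $[\omega_t]=\Theta_t$. I then average
\[
\tilde\omega_t = \int_G \gamma^{*}\omega_t \, d\mu_G(\gamma),
\]
using a normalized Haar measure $d\mu_G$ on $G$. The adapted trivialization makes the $G$-action on $M$ independent of $t$ and $J_t$-holomorphic, so each $\gamma^{*}\omega_t$ is a smooth $(1,1)$-form for $J_t$, and $\tilde\omega_t$ depends smoothly on $t$. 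Connectedness of $G$ gives $\gamma^{*}[\omega_t]=[\omega_t]$ for every $\gamma \in G$, hence $[\tilde\omega_t]=\Theta_t$. Since $G$ preserves $g$, $\tilde\omega_0=\omega_g$. The $G$-invariant Kähler metrics $g_t$ whose Kähler forms are $\tilde\omega_t$ constitute the desired family.

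The only point that needs care is the positivity of $\tilde\omega_t$: averaging preserves positivity provided $\omega_t$ is already pointwise positive at every point of $M$, and this holds for $t$ in a small enough neighborhood of $0$ by compactness of $M$ and continuity of $\omega_t$, possibly after one further shrinking of $B$. The only other delicate ingredient is the initial conjugation, which genuinely requires the combination of the connectedness of $G$ with Calabi's maximality theorem for isometry groups of extremal metrics; once these are in place, the rest is a standard averaging argument.
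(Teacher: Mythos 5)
Your proposal is correct and follows essentially the same route as the paper: conjugate $G$ into $\Isom_0(\cX,g)$ using Calabi's maximality theorem, produce the family $\omega_t$ via Kodaira--Spencer, and average over $G$, using connectedness to keep the class $\Theta_t$ fixed. The extra details you supply (the conjugacy theorem for maximal compact subgroups, and positivity of the averaged form --- which in fact holds automatically since each $\gamma^*\omega_t$ is a positive $(1,1)$-form for $J_t$ and positivity is convex) only make explicit what the paper leaves implicit.
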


\begin{proof}
The connected component of the identity in the group of
iso\-metries denoted $\Isom_0(\cX,g)$ is a maximal compact connected subgroup of
${\rm Aut}(\cX)$ \cite{c2}. Thus, we may assume that $G\subset
\Isom_0(\cX,g)$ up to conjugation by an element of ${\rm Aut}(\cX)$.

Let $g_0$ be the $G$-invariant metric on $(M,J_0)$ that corresponds to $g$ by
the isomorphism $\cX\simeq (M,J_0)$. By the Kodaira-Spencer theory, we can
extend $g_0$ to a smooth family of K\"ahler metrics $g_t$ on $(M,J_t)$ that
represent $\Theta_t$ for $t$s sufficiently small. We can 
average these metrics if necessary to make of them
$G$-invariant. Since $G$ acts isometrically on $g_0$, the averaging process 
leaves $g_0$ unchanged. On the other hand, $G$ acts trivially on the
cohomology, and hence, on $\Theta_t$. So the averaging of the metric $g_t$ 
does not change the K\"ahler class that the metric represents. 
This finishes the proof.
\end{proof}

\begin{definition}
Let $(\mc{M}\to B,\Theta)$ be a polarized family of deformations of 
$(\cX,\Omega)$ provided with a holomorphic action of a compact connected 
Lie group $G$ acting trivially on $B$. Assume that $\Omega$ is represented by an extremal metric $g$.
A smooth family of $G$-invariant K\"ahler metrics $g_t$ 
satisfying the properties given by Lemma \ref{lemma:adaptedmet} for an 
adapted trivialization is said to be an 
{\it adapted smooth family of K\"ahler metrics}.
\end{definition}
\medskip

Let us notice that for the adapted family of metrics $g_t$ of
Lemma \ref{lemma:adaptedmet}, the metric $g_0$ coincides with the metric
$g$ on $\mc{X}$ up to the conjugation by an automorphism. 
Throughout the rest of the paper, we shall assume
that the isomorphism $\cX\simeq\cM_0$ has been so conjugated so
that $g_0$ and $g$ coincide. 

In our considerations below, the group $G'$ of \S\ref{sec:kgi} will always
be the connected component of the identity in the 
isometry group ${\rm Isom}(\mc{X},g)$ of $g$.

\subsection{Analytical considerations}
Let $(\cM\to B,\Theta)$ be a polarized deformation of $(\cX,\Omega)$,
and let $G$ be a compact Lie group acting holomorphically on $\cM\to
B$ and trivially on $B$.
We assume that $G$ is contained in $G'$. 
Let $g$ be an extremal metric on $\cX$ that represents the class
$\Omega$. We consider an adapted trivialization $\cM_t\simeq (M,J_t)$ and
adapted family of K\"ahler metrics $g_t$ on $(M,J_t)$
(cf. \S\ref{sec:holac} and \ref{eqsetup}) such that
$g_0=g$.

Let  $L^{2}_{k}(M)$ be the $k$th Sobolev space defined by $g$, and let
$L^2_{k,G}$ be space of elements in $L^2_k$ that are $G$-invariant. 
The latter can be defined as the Banach space 
completion of $C^\infty_{G}(M)$ in the $L^2_k$ norm.

We denote by $\omega_t$ the K\"ahler form of the metric $g_t$ on $(M,J_t)$ 
of the adapted family. 
Let $\Harm_t(M)$ be the space of  $g_t$-harmonic real $(1,1)$-forms on
$(M,J_t)$. By Hodge theory, we have that 
$$
\Harm_t(M)\simeq H^{1,1}(M,J_t))\cap H^2(M,\RR)\, .
$$
Since $G$ is connected, it acts trivially on the cohomology of $M$. By the
uniqueness of the harmonic representative of a close form, it therefore acts
trivially on $\Harm_t$ also. Further, the Kodaira-Spencer theory
shows that all the spaces $\Harm_t(M)$ are isomorphic for $t$s sufficiently
small, and form the fibers of a vector bundle $\Harm(M)\to B$.

For a given function 
$\phi \in L^2_{k+4,G}(M)$ and $\alpha\in \Harm_t(M)$ sufficiently small, one
can define a new K\"ahler metric  $g_{t,\alpha,\phi}$ on $(M,J_t)$ with K\"ahler form
$$\omega_{t,\alpha,\phi}=\omega_t + \alpha + dd^c\phi.
$$
By definition, the deformed metric $g_{t,\alpha,\phi}$ is
automatically invariant under the  $G$-action and
represents the K\"ahler class $\Theta_{t,\alpha}=\Theta_t+[\alpha]$.

The real and complex Lichnerowicz operator of $g_{t,\alpha,\phi}$ will be denoted
$\LL_{t,\alpha,\phi}$ and
 $L_{t,\alpha,\phi}$
 respectively. The space of Killing potentials for the metric
 $g_{t,\alpha,\phi}$ is given by
$i\ker \LL_{t,\alpha,\phi}$. 

Since $G$ acts 
isometrically on 
$g_{t,\alpha,\phi}$, the Lie algebra   $\fg_0\subset \fh_0(M,J_t)$
consists of Hamiltonian
Killing fields  for the metric
$g_{t,\alpha,\phi}$ as well.
Let $\cH^{\fg_0}_{t,\alpha,\phi}$ be the space of 
holomorphy potentials corresponding to the Killing fields in $\fg_0$ for
the metric $g_{t,\alpha,\phi}$. The space
$\cH^{\fg_0}_{t,\alpha,\phi}$ consists of purely imaginary functions
of the form $iv$ where $v$ is a Killing potential for some Killing
vector field in $\fg_0$ relative to the metric $g_{t,\alpha,\phi}$.
Using the notations of \S\ref{sec:liealg}, we  introduce 
$\cH^{\fz_0}_{t,\alpha,\phi}$ as the $G$-invariant part of
$\cH^{\fg_0}_{t,\alpha,\phi}$. 
An essential feature of $\cH^{\fz_0}_{t,\alpha,\phi}$ is that it is
identified to $\RR\oplus \fg_0^G$, where $\fg_0^G$ is the Lie
subalgebra of $Ad(G)$-invariant vector fields in $\fg_0$ (or equivalently
the $G$-invariant one when $\fg_0$ is considered as a Lie subalgebra
of $\ff(M)$).  

 It follows
that the spaces $\cH^{\fz_0}_{t,\alpha,\phi}$ have constant dimension and that
they are the fibers of a vector bundle $\cH^{\fz_0}$ over a
neighborhood of the origin in the total space of the bundle 
$L^2_{k+4,G}(M)\oplus \Harm(M) \to B$.

The $L^2$-norm on $L^2_{k',G}(M)$ induced  by the Riemannian
metric $g_{t,\alpha,\phi}$ allows us to define the orthogonal
$W_{k',t,\alpha,\phi}$ of $i\cH^{\fz_0}_{t,\alpha,\phi}$ and an orthogonal 
direct sum
$$
L^2_{k',G}(M)= i\cH^{\fz_0}_{t,\alpha,\phi}\oplus  W_{k',t,\alpha,\phi}
$$
varying smoothly with $(t,\alpha,\phi)$.
This construction provides Banach bundles  
$$W_{k'}\to \cV$$
 where
$\cV$ is a sufficiently small neighborhood of the origin in the
total space of $L^2_{k+4,G}(M)\oplus \Harm(M) \to B$.
We shall denote by 
$$
\pi^W_{t,\alpha,\phi}: L^2_{k',G}(M) \to  W_{k',t,\alpha,\phi}\quad
\mbox { and }\quad \pi^G_{t,\alpha,\phi}: L^2_{k',G}(M) \to  i\cH^{\fz_0}_{t,\alpha,\phi}
$$
the canonical projection associated to the above splitting.
The \emph{reduced scalar curvature} $s^G_{t,\alpha,\phi}$ of
$g_{t,\alpha,\phi}$ is given  by
$s^G_{t,\alpha,\phi}=\pi^W_{t,\alpha,\phi}(s_{g_{t,\alpha,\phi}}) =
(\BOne -\pi^G_{t,\alpha,\phi})(s_{g_{t,\alpha,\phi}})$.
We are looking for particular extremal metrics near $g$, namely the one with
vanishing reduced scalar curvature 
\begin{equation}
  \label{eq:secs}
s^G_{t,\alpha,\phi}=0 .
\end{equation}
The LHS of \eqref{eq:secs} can be interpreted as a section of the
bundle $W_k\to \cV$. Our goal is to seek the zeroes of
this section of Banach bundle.
Keeping a more prosaic style we can express
equation~\eqref{eq:secs} more concretely by using suitable
trivialisations of the 
relevant bundles as follows:
the metric $g_0$ induces an $L^2$-norm and an associated 
 orthogonal projection $P=\pi^W_0:L^2_{k+4,G}(M)\to  W_{k,0}$. For
 parameters $(t,\alpha,\phi)$ sufficiently small, the restriction 
$$
P:W_{k,t,\alpha,\phi}\to W_{k,0},
$$
is automatically an isomorphism.

The bundle of harmonic real $(1,1)$-forms  $\Harm(M)\to B$,
admits a trivialization in a neighborhood of the central fiber.
Thus we have a smooth
isomorphism of vector bundle $h$, up to the cost of shrinking $B$ to
some smaller neighborhood of the origin, which commutes with the canonical projections
$$
\xymatrix{
B\times \Harm_0(M)\ar[rr]^h\ar[dr]& &\Harm(M)\ar[dl]\\
&B &
}
$$
and such that $h$ restricted to the central fiber is the identity. We shall use the notation $h(t,\alpha)=h_t(\alpha)\in \Harm_t(M)$.

Let $\cU$ be a sufficiently small open neighborhood of the origin in
$B\times \Harm_0(M)\times W_{k+4,0}$ such that the following map is
 defined 
\begin{equation}
  \label{eq:mapSbis}
  \begin{array}{lclc}
    \cS:&\cU &\to &  B\times W_{k,0}\\
&(t,\alpha,\phi)&\mapsto &
\left (t,P\left (s^G_{t,h_t(\alpha),\phi}\right )\right )
  \end{array}
\end{equation}

\begin{lemma}
\label{diff}
  The map $\cS$ is $C^1$ and its differential is a Fredholm operator.
Assuming that the K\"ahler metric $g$ on $\cX$ has vanishing reduced scalar
curvature $s^G_g=0$, the differential at $(t,\alpha,\phi)=0$  is given by 
given by a linear operator of the form
$$
\left (\begin{array}{cc}
  \BOne & 0 \\
* & S_{G,g}
\end{array}\right )
$$
where 
$$
S_{G,g}(\dot\alpha,\dot \phi) = - 2 \LL_g\dot\phi +P(\dot s
^G_g(\dot\alpha))
$$
is the differential of $P(s^G)$ at $g$ 
in the direction of $(\dot\alpha,\dot\phi)$.
In the case where $\dot\alpha$ is tracefree, we have
$$
S_{G,g}(\dot{\alpha},0)=
\dot s^G_g(\dot\alpha)= P(G_g(\la \dot\alpha,dd^c s_g\ra) -
2\la\dot\alpha,\rho\ra)\, .
$$
\end{lemma}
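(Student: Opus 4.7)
The plan is to verify, in turn: the $C^1$-regularity of $\cS$, the block structure of $D_{0}\cS$ with the stated expression for the lower-right block $S_{G,g}$, and the Fredholm property of $D_{0}\cS$. The last two steps hinge on the hypothesis $s^G_g=0$ and on Lemma~\ref{lemma:diff1}.

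First, since $k>n$, the space $L^2_{k+4,G}(M)$ is continuously embedded in $C^4(M)$ and is a Banach algebra, so the K\"ahler metric $g_{t,h_t(\alpha),\phi}$ depends smoothly on $(t,\alpha,\phi)$ as a section of symmetric $2$-tensors in $L^2_{k+2,G}$. Its scalar curvature is polynomial in the entries of the metric and their derivatives up to order two, so it depends smoothly on $(t,\alpha,\phi)$ in $L^2_{k,G}(M)$. The finite-rank subbundle $\cH^{\fz_0}\to\cV$ has constant rank $1+\dim\fg_0^G$ by the identification with $\RR\oplus\fg_0^G$ recorded in the setup; consequently the projection $\pi^W_{t,h_t(\alpha),\phi}$ varies smoothly in operator norm, and postcomposing with the fixed isomorphism $P$ preserves this smoothness. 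Hence $\cS$ is $C^1$ (in fact smooth) on a neighbourhood of the origin.

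Next, the first component of $\cS$ is the projection $(t,\alpha,\phi)\mapsto t$, whose differential is the identity on the $t$-slot and zero elsewhere, yielding the first row $(\BOne,0)$. Because $P$ is a fixed bounded linear operator, the differential of the second component restricted to $(\dot\alpha,\dot\phi)$ at $t=0$ is simply $P\bigl(D_g s^G_g(\dot\alpha,\dot\phi)\bigr)$. Lemma~\ref{lemma:diff1} identifies
$$
D_g s^G_g(\dot\phi)=-2\LL_g\dot\phi+\la d\dot\phi,ds^G_g\ra,
$$
which collapses to $-2\LL_g\dot\phi$ since $s^G_g=0$ forces $ds^G_g=0$; moreover $\LL_g\dot\phi$ lies in $W_{k,0}$ by $L^2$-orthogonality to $\ker\LL_g\supseteq i\cH^{\fz_0}_g$, so $P$ acts as the identity on it. For trace-free $\dot\alpha$, Lemma~\ref{lemma:diff1} gives $D_g s^G_g(\dot\alpha)=\pi^W\bigl(G_g(\la\dot\alpha,dd^cs_g\ra)-2\la\dot\alpha,\rho_g\ra\bigr)$, and since the projection $\pi^W$ appearing here is evaluated at $g_0$, i.e. equals $P$, postcomposing with $P$ and using $P^2=P$ produces the stated formula. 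Summing the two contributions yields the expression for $S_{G,g}$.

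Finally, for the Fredholm property, the block structure reduces matters to Fredholmness of $S_{G,g}:\Harm_0(M)\times W_{k+4,0}\to W_{k,0}$. The factor $\Harm_0(M)$ is finite-dimensional, so the $\dot\alpha$-contribution is a compact (indeed finite-rank) operator, and it suffices to prove that $\dot\phi\mapsto -2\LL_g\dot\phi$ is Fredholm on $W_{k+4,0}\to W_{k,0}$. The real operator $\LL_g$ is elliptic of order four, formally self-adjoint, and commutes with the isometric $G$-action; by standard elliptic theory it defines a Fredholm map $L^2_{k+4,G}(M)\to L^2_{k,G}(M)$, and restriction to the closed subspaces $W_{\bullet,0}$, of finite codimension in $L^2_{\bullet,G}(M)$, preserves the Fredholm property. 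The main technical subtlety in the proof will be the step above identifying the linearisation: tracking the smooth dependence of the fibres $W_{k,t,\alpha,\phi}$ on the parameters and exploiting the fact that $P$ is a fixed projection rather than the varying $\pi^W_{t,\alpha,\phi}$, which is what allows the computation to reduce directly to Lemma~\ref{lemma:diff1} without any extra variation-of-projection term.
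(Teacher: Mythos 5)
Your proof is correct and follows essentially the same route as the paper: the paper's own proof simply cites the $C^1$ dependence of the reduced scalar curvature on $(t,\alpha,\phi)$ and Lemma~\ref{lemma:diff1} for the linearisation, deferring the Fredholm discussion to the subsequent index lemma, where $S_{G,g}$ is likewise treated as a compact (finite-rank in $\dot\alpha$) perturbation of $-2\LL_g$. Your additional details — the Banach-algebra/Sobolev bookkeeping, the observation that $s^G_g=0$ kills the $\la d\dot\phi,ds^G_g\ra$ term, and that $P$ acts as the identity on the image of $\LL_g$ — are exactly the steps the paper leaves implicit.
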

\begin{proof}
The map $\cS$ is $C^1$ since the reduced scalar curvature depends in a
$C^1$ manner of the data $(t,\alpha,\phi)$. The computation of the
differential of $\cS$ is deduced from Lemma~\ref{lemma:diff1}.
\end{proof}

At this point, we may compute the index of the differential of
$\cS$:
\begin{lemma}
Under the assumption $s^G_g=0$, the index of the differential of 
$\cS$ at the origin is equal
  to $h^{1,1}(\cX)$.
\end{lemma}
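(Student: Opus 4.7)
The plan is to compute the index by reducing to the index of the lower-right block. By Lemma~\ref{diff}, the differential $d\cS|_0$ has the triangular form $\begin{pmatrix}\BOne & 0 \\ * & S_{G,g}\end{pmatrix}$, so its Fredholm index equals the index of
$$
S_{G,g}:\Harm_0(M)\oplus W_{k+4,0}\to W_{k,0},\qquad S_{G,g}(\dot\alpha,\dot\phi)=-2\LL_g\dot\phi+P(\dot s^G_g(\dot\alpha)).
$$
The space $\Harm_0(M)$ is finite-dimensional of real dimension $h^{1,1}(\cX)$ by Hodge theory, so by the additivity of the Fredholm index under extension of the domain by a finite-dimensional summand the problem reduces to showing that $-2\LL_g:W_{k+4,0}\to W_{k,0}$ has Fredholm index zero. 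Once this is established, $\mathrm{ind}(d\cS|_0)=0+h^{1,1}(\cX)$, as desired.

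For the residual claim, I would exploit that $\LL_g$ is an elliptic, real, self-adjoint operator of order four on $L^2_{k+4,G}(M)$. Let $K:=\ker\LL_g$ in this space. The crucial preliminary observation is that $i\cH^{\fz_0}_g\subseteq K$: every $v\in i\cH^{\fz_0}_g$ is a $G$-invariant Killing potential, so $iv\in\cH_g$ gives $L_g v=0$ by $\CC$-linearity of $L_g$, and the lemma following \eqref{eq:decomp} then yields $\LL_g v=0$. Writing $K=i\cH^{\fz_0}_g\oplus K'$ orthogonally with $K':=K\cap W_{k+4,0}$, self-adjointness of $\LL_g$ forces $\mathrm{Im}(\LL_g)=K^\perp\subseteq (i\cH^{\fz_0}_g)^\perp=W_{k,0}$, so post-composition with $P$ acts as the identity on the image. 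Consequently
$$
\ker(P\circ\LL_g|_{W_{k+4,0}})=K'\qquad\text{and}\qquad\mathrm{coker}(P\circ\LL_g|_{W_{k+4,0}})\simeq K',
$$
and the restricted operator has index zero.

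Combining these two steps delivers $\mathrm{ind}(d\cS|_0)=h^{1,1}(\cX)$. The main delicate point is the inclusion $i\cH^{\fz_0}_g\subseteq\ker\LL_g$ together with the careful identification of the cokernel of $P\circ\LL_g$ within $W_{k,0}$ rather than in the ambient $L^2_{k,G}(M)$; everything else is standard Fredholm bookkeeping and does not require the Futaki nondegeneracy hypothesis, which will only enter later when one analyses surjectivity of $d\cS|_0$.
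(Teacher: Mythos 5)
Your proof is correct and follows essentially the same route as the paper: reduce to the block $S_{G,g}$, observe that the contribution of the finite-dimensional summand $\Harm_0(M)$ adds $h^{1,1}(\cX)$ to the index, and use that $\LL_g:W_{k+4,0}\to W_{k,0}$ has index zero. The only difference is that you supply the justification (self-adjointness of $\LL_g$ together with $i\cH^{\fz_0}_g\subseteq\ker\LL_g$, so that kernel and cokernel of the restriction are both identified with $K'$) for the index-zero claim that the paper merely asserts.
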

\begin{proof}
  By Lemma \ref{diff}, the operator $S_{G,g}$
 is a compact perturbation of the map
$$
\begin{array}{cll}
  \Harm_0(M) \times W_{k+4,0}&\to &  
  W_{k,0}\\
(\alpha,\phi)&\mapsto & -2\LL_g\phi
\end{array}
$$
The Lichnerowicz operator $\LL_g: W_{k+4,0} \to W_{k,0}$ has index
$0$. We conclude that the differential of $\cS$ has index $\dim
\Harm_0(M) = h^{1,1}(\cX)$. 
\end{proof}

\subsection{Surjectivity}
We return to the study of the map $\cS$ with the notations of
\S\ref{eqsetup}. 
\begin{proposition}
\label{prop:subm}
Under the additional assumption that
  $g$ is an extremal metric on $\cX$ with  $s^G_g=0$, 
 the map $\cS$ defined at \eqref{eq:mapSbis} is a submersion at the
  origin if and only the relative Futaki invariant $\fFc_{G,\Omega}$ is non
  degenerate at $\Omega$, the K\"ahler class of $g$.
\end{proposition}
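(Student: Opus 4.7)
The plan is to exploit the upper-triangular structure of the differential given in Lemma~\ref{diff}: the map $\cS$ is a submersion at the origin if and only if the operator
\[
S_{G,g}:\Harm_0(M)\times W_{k+4,0}\longrightarrow W_{k,0},\qquad (\dot\alpha,\dot\phi)\longmapsto -2\LL_g\dot\phi+P(\dot s^G_g(\dot\alpha)),
\]
is surjective. Setting $\dot\alpha=0$ already puts $\LL_g(W_{k+4,0})$ in the image, so surjectivity is equivalent to surjectivity of the induced map
\[
\bar F:\Harm_0(M)\longrightarrow W_{k,0}/\LL_g(W_{k+4,0}),\qquad \alpha\longmapsto [P(\dot s^G_g(\alpha))].
\]

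The next step is to identify the target with $\fp_0/\fg_0$. The operator $\LL_g$ is elliptic and formally self-adjoint on $L^2_{k,G}(M)$, and a real $G$-invariant function $f$ lies in $\ker\LL_g$ if and only if $J\grad f$ is a $G$-invariant Hamiltonian Killing field, hence an element of $\fz'_0$. Consequently the kernel of $\LL_g$ restricted to $W_{k+4,0}$, namely functions $L^2$-orthogonal to the Killing potentials of $\fz_0$, is canonically $i\cH^{\fz'_0}_g/i\cH^{\fz_0}_g$, and by Lemma~\ref{lemma:isomlie} this is isomorphic to $\fz'_0/\fz_0\simeq \fp_0/\fg_0$. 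Self-adjointness yields the same description of the cokernel, so $W_{k,0}/\LL_g(W_{k+4,0})\simeq \fp_0/\fg_0$.

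With the problem now purely finite-dimensional, $\bar F$ is surjective if and only if its transpose $\bar F^{*}:\fp_0/\fg_0\to \Harm_0(M)^{*}\simeq (H^{1,1}(\cX)\cap H^2(\cX,\RR))^{*}$ is injective. Taking a $G$-invariant representative $\Xi\in\fz'_0$ of a class in $\fp_0/\fg_0$ with real Killing potential $v_\Xi$, the pairing reads
\[
\bar F^{*}(\Xi)(\alpha)=\la P(\dot s^G_g(\alpha)),v_\Xi\ra_{L^2}=\la\dot s^G_g(\alpha),\pi^W(v_\Xi)\ra_{L^2},
\]
which by Lemma~\ref{lemma:derfut} equals $\tfrac{d}{dt}\fFc_{G,\Omega+t\alpha}\big|_{t=0}(\Xi)$ when $\alpha$ is trace-free. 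For the trace part $\alpha=c\,\omega_g$, the scaling identity $\fFc_{G,\lambda\Omega}(\Xi)=\lambda^{n}\fFc_{G,\Omega}(\Xi)$ together with the assumption $s^G_g=0$, which forces $\fFc_{G,\Omega}\equiv 0$ via \eqref{eq:futscal}, shows that the contribution vanishes in agreement with the differential of the Futaki invariant on that summand. Thus $\bar F^{*}$ coincides, up to a nonzero constant, with the restriction of the map \eqref{eq:fund} to $\fp_0/\fg_0$, and its injectivity is exactly the $G'$-nondegeneracy of $\fFc_{G,\Omega}$ at $\Omega$. The chief technical point is the identification of $\ker\LL_g|_{W_{k+4,0}}$ with $\fp_0/\fg_0$, where one must carefully track the effect of $G$-invariance, of constants, and of the passage from centralizer to normalizer provided by Lemma~\ref{lemma:isomlie}; once this is in hand, the rest is a duality argument packaged by Lemma~\ref{lemma:derfut}.
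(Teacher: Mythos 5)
Your proof is correct and follows essentially the same route as the paper's: both arguments reduce the question to the cokernel of $S_{G,g}$, use ellipticity and self-adjointness of $\LL_g$ to identify that cokernel with $i\cH^{\fz'_0}_g/i\cH^{\fz_0}_g\simeq\fp_0/\fg_0$, and then invoke Lemma~\ref{lemma:derfut} to recognize the residual pairing as the linearized relative Futaki invariant; your transpose/duality phrasing is just the dual formulation of the paper's direct description of cokernel elements $\psi$. If anything, you are slightly more careful than the paper in disposing of the trace direction $\alpha=c\,\omega_g$ via the scaling of $\fFc_{G,\lambda\Omega}$ and the vanishing of $\fFc_{G,\Omega}$ forced by $s^G_g=0$.
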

\begin{proof}
  The cokernel of the differential of $\cS$ is identified to $\psi\in
  W_{k,0}$ such that
$$
\la  \LL_g\dot \phi,\psi\ra =0,\quad \la  P(\dot s^G_g(\dot\alpha)),
\psi\ra =0
$$
for all $\dot\phi\in W_{k+4,0}$ and $\dot \alpha$ 
$g$-harmonic $(1,1)$-forms on $(M,J_0)\simeq \cX$.

The first equation implies that $\LL_g\psi = 0$. Therefore we have
$\psi\in i \cH^{\fz'_0}_g \simeq \RR\oplus \fz'_0$. Let $\Xi\in\fz'_0$ be the Killing field represented by
$\psi$. 
Using the second condition in the particular case where $\dot\alpha$
is tracefree, we see that
$$ 0= \la  P(\dot s^G_g(\dot\alpha)),
\psi\ra= \la  \dot s^G_g(\dot\alpha),
\psi\ra = \int_\cX \psi \dot s^G_g(\dot\alpha)\,  d\mu_g=\dot
\fFc_{G,\Xi,\Omega}(\dot\alpha),$$
where the last equality is given by Lemma~\ref{lemma:derfut}.
The relative Futaki nondegeneracy condition implies that $\Xi\in \fz_0$, in
other words $\psi\in i\cH^{\fz_0}_g$. By definition
 $\psi$ is orthogonal to $i\cH^{\fz_0}_g$, hence $\psi=0$ and $\cS$ is
 a submersion.

Conversely, it is easy to check that if the relative Futaki invariant
is degenerate, then $\cS$ is not a submersion.
\end{proof}

\begin{remark}
Under the Futaki nondegeneracy assumption, we may choose any linear
subspace $V\subset \Harm_0(M)$ such that the linearized Futaki
invariant induces an injective map $\fp_0/\fg_0\to V^*$. Then the
corresponding restriction of  $\cS$ is still a submersive map at the origin.  
\end{remark}

In conclusion, we obtain the following theorem:
\begin{theorem}
\label{theoB} \label{theotech}
Let $(\cM\to B,\Theta)$ be a  polarized family of complex deformations
of a polarized manifold $(\cX,\Omega)$. Assume that $\cM\to B$ is
endowed with a holomorphic action of a connected compact Lie group $G$
acting trivially on $B$
and that $\cX$  admits a $G$-invariant
extremal metric $g$ with K\"ahler class $\Omega$ and such that $s^G_g=0$.

Given an adapted trivialization  $\cM_t\simeq (M,J_t)$
defined for $t$ sufficiently small, let $g_t$ be any adapted smooth family of
$G$-invariant K\"ahler metrics on $(M,J_t)$ representing
$\Theta_t$ (cf. \S\ref{sec:holac} and \S\ref{eqsetup}).

Assume that  the
relative Futaki invariant $\fFc_{G,\Omega}$ is non
degenerate at $g$, then choose a space $V\subset \Harm_0(M)$ such that
the linearized relative Futaki invariant restricted to $\fp_0/\fg_0\to V^*$ is
injective.
Then, the space of solutions 
$$S=\{(t,\alpha,\phi)\in\cU \quad |\quad \alpha\in V \mbox{ and }s^G_{g_{t,h_t(\alpha),\phi}}=0\}$$
is a smooth
manifold of real dimension $\dim V+\dim B$, in a sufficiently
small neighborhood of the origin. For any $(t,\alpha,\phi)\in S$,  $\alpha$ and
$\phi$ are automatically smooth.

The canonical projection $S\to B$ is a submersion near the origin and 
the fibers are $\dim V$-dimensional submanifold of $S$
corresponding to families of
$G$-invariant K\"ahler metrics $g_{t,\alpha,\phi}$   on $\cM_t$ with
vanishing reduced scalar curvature  representing a perturbation of the
polarization given by $\Theta_t+[\alpha]$.
\end{theorem}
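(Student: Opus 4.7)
The plan is to obtain Theorem~\ref{theotech} as a direct consequence of the implicit function theorem applied to a restricted version of the Fredholm map $\cS$ introduced in \eqref{eq:mapSbis}, together with Proposition~\ref{prop:subm} and standard elliptic regularity.

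First I would restrict $\cS$ to the chosen subspace $V\subset \Harm_0(M)$, producing a smooth map
$$
\tilde\cS: \tilde\cU \to B\times W_{k,0}, \qquad
\tilde\cS(t,\alpha,\phi)=\bigl(t,P(s^G_{t,h_t(\alpha),\phi})\bigr),
$$
where $\tilde\cU$ is the intersection of $\cU$ with $B\times V\times W_{k+4,0}$. By Lemma~\ref{diff} the differential of $\tilde\cS$ at the origin has the block form $\left(\begin{smallmatrix}\BOne & 0\\ * & S_{G,g}|_V\end{smallmatrix}\right)$, and its linearized scalar curvature block is, by construction of $V$ and the remark following Proposition~\ref{prop:subm}, surjective onto $W_{k,0}$ (the cokernel argument of Proposition~\ref{prop:subm} goes through verbatim because the obstruction class, being in $i\cH^{\fz_0}_g$, is killed by the orthogonality defining $W_{k,0}$). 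So $\tilde\cS$ is a submersion at the origin.

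Second, the differential of $\tilde\cS$ at the origin is Fredholm, so its kernel has dimension equal to the index plus the dimension of the cokernel; combined with submersivity this kernel has dimension $\dim B+\dim V$. The Banach-space implicit function theorem then exhibits the preimage $\tilde\cS^{-1}(B\times\{0\})$, restricted to a sufficiently small neighborhood of the origin, as a smooth submanifold $S\subset \tilde\cU$ of dimension $\dim B+\dim V$, which is exactly the set described in the theorem. The canonical projection $S\to B$ is a submersion near the origin because the first component of $\tilde\cS$ is the projection onto $B$ itself; hence its fibers are smooth submanifolds of $S$ of real dimension $\dim V$, parametrizing, for each fixed $t$, a family of $G$-invariant K\"ahler forms $\omega_{t,h_t(\alpha),\phi}=\omega_t+h_t(\alpha)+dd^c\phi$ on $\cM_t$ whose K\"ahler class is $\Theta_t+[h_t(\alpha)]$ and whose reduced scalar curvature vanishes.

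The final point that requires attention is the a priori regularity of solutions. A solution $(t,\alpha,\phi)\in S$ lies in the Sobolev space $L^2_{k+4,G}$, but harmonic forms in $V$ are smooth and the equation $s^G_{g_{t,h_t(\alpha),\phi}}=0$, rewritten as $s_{g_{t,h_t(\alpha),\phi}}\in i\cH^{\fz_0}_{t,h_t(\alpha),\phi}$, is a nonlinear elliptic equation of order four in $\phi$ with smooth coefficients and with right-hand side a smooth function (a holomorphy potential). A standard elliptic bootstrap, applied one derivative at a time and using the Banach algebra property of $L^2_k$ for $k>n$, then upgrades $\phi$ to a smooth function; $\alpha$ is automatically smooth since it lies in the finite-dimensional space $V$ of harmonic forms.

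The main obstacle is the submersivity step: one must check that the restriction to $V$ does not destroy surjectivity of the linearization. This is where the nondegeneracy of $\fFc_{G,\Omega}$ enters, through Lemma~\ref{lemma:derfut}, exactly as in the proof of Proposition~\ref{prop:subm}; the injectivity of $\fp_0/\fg_0\to V^*$ is precisely what is needed to rule out nonzero cokernel elements in $i\cH^{\fz'_0}_g$. Once this is secured, everything else, including the bootstrap and the submersion property of $S\to B$, is routine.
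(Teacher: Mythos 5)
Your proposal is correct and follows essentially the same route as the paper: restrict $\cS$ to the slice $\alpha\in V$, invoke Lemma~\ref{diff} and the nondegeneracy hypothesis (via Proposition~\ref{prop:subm} and the remark following it) to get submersivity at the origin, and apply the implicit function theorem to obtain $S$ as a smooth manifold of dimension $\dim V+\dim B$ fibering over $B$. The only addition is your explicit elliptic bootstrap for the smoothness of $\phi$, which the paper's proof leaves implicit but which is indeed needed to justify the regularity claim in the statement; the one minor imprecision is your phrase attributing dimension $\dim B+\dim V$ to the kernel of $d\tilde\cS$ (that kernel has dimension $\dim V$; it is the preimage $\tilde\cS^{-1}(B\times\{0\})$ that has dimension $\dim B+\dim V$), which does not affect the argument.
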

\begin{proof}
  The hypothesis imply that the map $\cS$ restricted to
  $\cU'=\{(t,\alpha,\phi)\in \cU\;\|\;  \alpha \in V\}$ is a submersion at the
  origin. The kernel $K$ of the differential of $\cS$ has dimension equal
  to its index $\dim V$. Let $\pi_K:W_{k+4}\to K$ be the
  orthogonal projection onto $K$. By definition, the map
$$
\cU'\stackrel{\pi_K\times\cS}\longrightarrow K\times B\times W_{k,0}
$$
is an isomorphism. The implicit function theorem provides the desired 
solutions parametrized by $B\times K$.
\end{proof}

\begin{remark}
  When $\fq/\fg=0$, the Futaki invariant is automatically non
  degenerate and Theorem~\ref{theotech} applies for any subspace
  $V\subset \Harm_0(M)$. Setting $V=0$, the theorem provides for every $t$
  sufficiently small a unique extremal metric  on $\cM_t$ with K\"ahler
  class $\Theta_t$. In other words, the original polarization $\Theta$ does not
  have to be perturbed in this case.
\end{remark}

\subsection{Generalization to nonconnected groups}
\label{nonconnected}
Although to this point the group $G$ has been assumed to be connected, this
assumption is not necessary to a large extent. The connectedness was used 
to ensure that $G$ acts trivially on the cohomology of the manifold, hence 
on harmonic forms. Thus, we merely need the assumption that $G$ acts 
trivially on on the relevant K\"ahler classes.

For instance, if  $G$ is contained in the connected
component of the identity of $\Aut(\cX)$, then $G$ acts
trivially on the cohomology of $\cX$. We can check that the
definition of the Futaki invariant relative to $G$ still makes
sense. The analysis developped at \S\ref{eqsetup} extends trivially in
this case by working $G$-equivariantly. The only difference in this more 
general framework will appear in Proposition \ref{prop:subm}. For in order 
to make sure that this proposition still holds, we should change slightly 
the definition of relative
Futaki nondegeneracy. Let us recall that the linearized relative Futaki
invariant induces a map $\fz'_0/\fz_0\to (H^{1,1}(\cX)\cap
H^2(\cX,\RR))^*$. If $G$ is connected, $\fz'_0$ agrees with  the space
of $G$-invariant Hamiltonian Killing 
fields on $(\cX,g)$. But if $G$ is not connected, there is a residual
action of $G$ on $\fz'_0$ by a 
finite group action. The space of  $G$-invariant vector fields in
$\fz'_0$ is denoted  $\fz'_{0,G}$. Similarly we denote by $\fz_{0,G}$
the $G$-invariant part of $\fz_0$. There is an embedding
$\fz'_{0,G}/\fz_{0,G} \subset  \fz'_{0}/\fz_{0}$ and we shall say that
the Futaki invariant $\fF^c_{\Omega,G}$ is nondegenerate at $g$ if the
map 
$$\fz'_{0,G}/\fz_{0,G}\to  (H^{1,1}(\cX)\cap
H^2(\cX,\RR))^*$$
 is injective.

Using this new definition of the Futaki nondegeneracy, we can drop the 
connectivity assumption on $G$  and just assume 
that $G\subset G'=\Isom_0(\cX,g)$ in Theorem \ref{theotech}. We can then
derive the same conclusions.

\section{Applications}
\label{sec:appli}
\label{s5}
In this section we apply Theorem \ref{theo} in some particular
situations, and especially to
produce new examples of K\"ahler manifolds with extremal metrics.

\subsection{Relation to LeBrun-Simanca deformation theory}\label{lst}
It is easy to see that Theorem \ref{theo} enables us to recover the
deformation theory of \cite{ls2}.

\subsubsection{Case $G=\{1\}$}  Let $g$ be a K\"ahler metric on $\cX$
with K\"ahler class $\Omega$. Then, we have $s^G_g=s_g-\bar s_g$, 
where $\bar s_g$
is the average  of $s_g$ on $\cX$. So the condition $s^G_g=0$ is
equivalent to the property that $g$ has constant scalar
curvature. Furthermore the Futaki invariant relative to $G=\{1\}$
agrees with the non-relative Futaki invariant. In this case,
Theorem~\ref{theo} is equivalent to the deformation theory for cscK
metrics of~\cite{ls2}.

\subsubsection{Case $G=G'$} If $g$ is extremal, then $G'=\Isom_0(\cX,g)$
is a maximal connected compact subgroup of $\Aut(\cX)$. Then every
extremal metric is $G$-invariant, upto conjugation, and the condition
$s^G_g=0$ for a $G$-invariant K\"ahler metric is equivalent to the
metric being extremal. 

Theorem~\ref{theo} applies. In the case of trivial complex
deformations with $B=\{0\}$, one recovers the openness theorem of
\cite{ls2}. We actually seem to have a more general result for it is possible
to allow complex deformations with a holomorphic action of~$G$ acting
trivially on $B$.

\subsection{Deformations with maximal torus symmetry}\label{tde}
The deformation theory under  a maximal compact torus symmetry 
is particularly
well behaved. 
Let $\cX$ be  complex manifold and $g$ an extremal metric on
$\cX$ with K\"ahler class $\Omega$. We shall denote by $G=T^n\subset
\Aut(\cX)$ a maximal  compact
torus.
 Up to conjugation by an automorphism, we may
assume that the metric $g$ is $G$-invariant so that $G\subset
G'=\Isom_0(\cX,g)$. The vector field
$\Xi=J\grad s_g$ is in  $\fz'\subset \fz''$. But in this case
$\fz''=\fz$ by maximality of $\fz$ (cf. proof of Lemma~\ref{torus}).
Hence $\Xi\in \fz\subset \fg$, which implies $s^G_g=0$.
The Futaki invariant is trivial, hence nondegenerate
(cf. Lemmas~\ref{torus}). So we may apply
Theorem~\ref{theo} to polarized deformations $(\cM\to B,\Theta)$ of
$(\cX,\Omega)$ endowed with a holomorphic action of $G$ acting
trivially on $B$.
 Thus we have proved Corollary~\ref{co2}.

\subsection{The Mukai-Umemura $3$-fold}\label{mu3}
Let $X_{\Pi_0}$ be the Mukai-Umemura $3$-fold of \S\ref{mudefo}. We use the
deformation $\cM\to \CC$ of $X_{\Pi_0}$ that we described there, which is
provided with an holomorphic action of a group $G$ isomorphic to the
dihedral group of order $8$ or the semi-direct product of
$\ZZ/2\rtimes \SS^1$.

\begin{proof}[Proofs of Corollaries \ref{cor:mu1}]
We use the fact that 
$X_{\Pi_0}$ admits a K\"ahler-Einstein 
metric \cite{Don08}.
Notice that although the group $G$ is disconnected, it acts trivially
on the cohomology of $\cM_t$ (cf. \S\ref{nonconnected}). So
it suffices to show that the relative Futaki invariant of $X_{\Pi_0}$ at the
K\"ahler-Einstein metric is nondegenerate in the sense \S\ref{nonconnected}. 
This follows trivially from
the fact that the space of $G$-invariant holomorphic vector fields on
$X_{\Pi_0}$ is reduced to $0$ since the action of $G$ does not fix any
point in  $\PP^1$. 
Thus $z'_{0,G}=0$ and the Futaki invariant is nondegenerate
necessarily nondegenerate.
We may apply Theorem \ref{theotech}, with the choice of space
$V=0$. Thus every class $\Theta_t$ is represented by a $G$-invariant extremal
K\"ahler metric $g_t$ for $t$ sufficiently small. Thus the holomorphic vector field $\grad s_{g_t}$ must
be $G$ invariant. As the action of $G$ does not fix any point in
$\PP^1$, it follows that there are no nontrivial $G$-invariant
holomorphic vector fields on $\cM_t$. Thus $s_{g_t}$ is constant
 and this finishes the proof of Corollary~\ref{cor:mu1}.
\end{proof}

\end{document}